\numberwithin{equation}{section}
\newtheorem{theorem}[subsection]{Theorem}
\newtheorem{prop}[subsection]{Proposition}
\newtheorem{lemma}[subsection]{Lemma}
\newtheorem{corollary}[subsection]{Corollary}
\theoremstyle{definition}
\newtheorem{define}[subsection]{Definition}
\newtheorem{remark}[subsection]{Remark}
\newcommand{\mf}[1]{\mathfrak{{#1}}}
\newcommand{\N}{\mathbb {N}}
\newcommand{\Q}{\mathbb {Q}}
\newcommand{\C}{\mathbb{C}}
\newcommand{\Z}{\mathbb Z}
\newcommand{\af}{\hat {\mathfrak g}}
\newcommand{\Nd}{N^ \circledast}
\newcommand{\A}{\mathcal A}
\newcommand{\V}{\mathbb V}
\newcommand{\B}{\mathbb B}
\newcommand{\ep}{\epsilon}
\newcommand{\ot} {\otimes}
\newcommand{\cd}{\cdot}
\newcommand{\Th}{\Theta}
\newcommand{\h}{\mathcal H \text{om}}
\newcommand{\M}{\mathbb M}
\title{On the semi-regular module and vertex operator algebras}
\author{Minxian Zhu}
\address{Department of Mathematics, Yale University, New Haven, CT 06520}
\email{minxian.zhu@yale.edu}
\begin{document}
\maketitle

\section{Introduction}

The aim of this paper is to give a proof of a conjecture stated in a previous paper by the author ([Z1]). 

Let $\mf g$ be a simple complex Lie algebra, 
$\af$ be the affine Lie algebra and $h^\vee$ be the dual Coxeter number of $\mf g$. 
Let $\mathcal A_{\mf g, k}$ be the vertex algebroid 
associated to $\mf g$ and a complex number $k$,  
according to [GMS1], 
we can construct a vertex algebra 
$U\A_{\mf g, k}$,  
called the enveloping algebra of $A_{\mf g, k}$. 
Set $\V = U\A_{\mf g, k}$. 
It is shown in [AG] and [GMS2] that 
not only $\V$ is a $\af$-representation of level $k$, 
it is also a $\af$-representation of the dual level $\bar k = - 2h^\vee - k$. 
Moreover the two copies of $\af$-actions commute with each other, 
i.e. $\V$ is a $\af_k \oplus \af_{\bar k}$-representation. 

When $k \notin \Q$, 
the vertex operator algebra $\V$ decomposes into 
$$\oplus_{\lambda \in P^+} V_{\lambda, k} \ot V_{\lambda^*, \bar k}$$
as a $\af_k \oplus \af_{\bar k}$-module (see [FS], [Z1]). 
Here $P^+$ is the set of dominant integral weights of $\mf g$, 
$V_{\lambda, k}$ is the Weyl module 
induced from $V_\lambda$, 
the irreducible representation of $\mf g$ with highest weight $\lambda$,  
in level $k$, 
and $V_{\lambda^*, \bar k}$ is induced from $V_\lambda^*$ in the dual level $\bar k$. 
In fact the vertex operators can be constructed 
using intertwining operators and Knizhnik-Zamolodchikov equations (see [Z1]). 

In the case where $k \in \Q$, 
the $\af_k \oplus \af_{\bar k}$-module structure of $\V$ is much more complicated. 
In the present paper, 
we prove a result about the existence of canonical filtrations of $\V$
conjectured at the end of [Z1].  
More precisely we will prove the following. 

\newcounter{foo}
\newtheorem{localtheorem}[foo]{Theorem}

\begin{localtheorem} \label{maintheorem}
\it{
Let $k \in \Q$, $k > - h^\vee$. 
The vertex operator algebra $\V$ 
admits an increasing (resp. a decreasing) filtration 
of $\af_k \oplus \af_{\bar k}$-submodules
with factors isomorphic to 
$$V_{\lambda, k} \ot V_{\lambda, \bar k}^c \quad 
(\text{resp. } V_{\lambda, k}^c \ot V_{\lambda, \bar k}), 
\quad \lambda \in P^+,$$
where $V_{\lambda, \bar k}^c$ is the contragredient module of $V_{\lambda, \bar k}$
defined by the anti-involution: 
$x(n) \mapsto - x(-n)$, $\underline c \mapsto \underline c$ 
of $\af$. 
}
\end{localtheorem}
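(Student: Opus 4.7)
The overall plan is a deformation-and-specialization argument. View $\V=\V_k$ as a specialization of the flat family $\{\V_{k'}\}_{k'\in\C}$ of enveloping vertex algebras of the vertex algebroids $\A_{\mf g,k'}$. For irrational $k'$ the decomposition of [FS], [Z1] gives $\V_{k'}=\bigoplus_\lambda V_{\lambda,k'}\ot V_{\lambda^*,\bar k'}$; since generic Weyl modules are irreducible, $V_{\lambda^*,\bar k'}\cong V_{\lambda,\bar k'}^c$, and the statement of the theorem already holds at irrational $k'$ with a direct sum in place of a filtration. The goal is to show that at the rational $k>-h^\vee$ this direct sum degenerates to a proper filtration whose factors still take the form $V_{\lambda,k}\ot V_{\lambda,\bar k}^c$.

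The filtration is built using the embedding $\mathcal{O}(G)=\bigoplus_\lambda V_\lambda\ot V_\lambda^*\hookrightarrow\V$ at the top of the conformal grading, where the two $\mf g$-actions realize the Peter--Weyl decomposition. For each dominant $\lambda$ set $\V^{\le\lambda}\subseteq\V$ to be the $\af_k\oplus\af_{\bar k}$-submodule generated by $\bigoplus_{\mu\le\lambda} V_\mu\ot V_\mu^*$ in the dominance order; this is the candidate increasing filtration. The decreasing filtration is then obtained via the canonical invariant bilinear form on $\V$, which realizes $\V$ as its own contragredient under $x(n)\mapsto-x(-n)$, so that dualizing the increasing filtration swaps the Weyl and contragredient-Weyl factors on both sides.

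The central claim is that the successive quotient $\V^{\le\lambda}/\V^{<\lambda}$ is isomorphic to $V_{\lambda,k}\ot V_{\lambda,\bar k}^c$. Surjectivity of a natural map out of $V_{\lambda,k}\ot V_{\lambda,\bar k}^c$ should follow directly from the fact that $V_\lambda\ot V_\lambda^*$ provides the correct generating data at the top. Injectivity follows from a character count: the $\af\oplus\af$-character of $\V_{k'}$ is constant in the flat family, at irrational $k'$ it equals $\sum_\lambda\chi(V_{\lambda,k'})\chi(V_{\lambda,\bar k'}^c)$, and since Weyl and contragredient-Weyl modules have identical characters, this equals the character sum of the proposed factors at our rational $k$, forcing each surjection to be an isomorphism at each step.

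The main obstacle is identifying the $\bar k$-factor specifically as the contragredient $V_{\lambda,\bar k}^c$ rather than the Weyl module $V_{\lambda^*,\bar k}$ or some intermediate extension between them: all these modules have the same character and composition factors, and become isomorphic at irrational level, so no numerical invariant can separate them. What should pin down the contragredient is the structure of the $\bar k$-action of $\mf g\ot t^{-1}\C[t^{-1}]$ on the top subspace $V_\lambda^*\subset\mathcal O(G)\subset\V$, together with the property that $\V^{\le\lambda}/\V^{<\lambda}$ contains $L(\lambda^*,\bar k)$ as a \emph{submodule} (not merely as a composition factor) concentrated in the top conformal degree --- the socle-at-top property that characterizes the contragredient Weyl module among its composition-equivalent neighbors. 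Verifying this rigorously from the vertex-algebroid definition of $\A_{\mf g,k}$, presumably through the semi-regular module machinery advertised in the paper's title, will be the technical heart of the argument.
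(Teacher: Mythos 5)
There is a genuine gap, and you have in effect located it yourself: the identification of the $\bar k$-side factor as the contragredient module $V_{\lambda,\bar k}^c$ is not an obstacle to be deferred, it is the entire content of the theorem at rational level, and your construction does not produce it. If $\V^{\le\lambda}$ is the $\af_k\oplus\af_{\bar k}$-submodule generated by $\bigoplus_{\mu\le\lambda}V_\mu\ot V_\mu^*$, then the natural map onto $\V^{\le\lambda}/\V^{<\lambda}$ determined by top data goes out of the induced module $V_{\lambda,k}\ot V_{\lambda^*,\bar k}$ (Weyl tensor Weyl), not out of $V_{\lambda,k}\ot V_{\lambda,\bar k}^c$: a contragredient Weyl module is cogenerated, not generated, by its top graded piece at rational level, so there is no ``natural map out of $V_{\lambda,\bar k}^c$'' attached to generating data, and the claimed surjectivity step fails as stated. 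The character count cannot repair this, since Weyl, contragredient Weyl, and all intermediate extensions share the same character -- again, as you note. In addition, exhaustion $\bigcup_\lambda\V^{\le\lambda}=\V$ is only clear at irrational level (where it follows from the [FS], [Z1] decomposition); at rational $k$ it is not established that $\mathcal O(G)$ generates $\V$ as an $\af_k\oplus\af_{\bar k}$-module, and if it did, your own argument would then force factors of the form $V_{\lambda,k}\ot V_{\lambda^*,\bar k}$ rather than the asserted ones. The appeal to a ``socle-at-top property'' is the right intuition but is exactly what must be proved, and nothing in the proposal proves it.

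What the paper does instead, and what your sketch only gestures at, is to make the semi-regular module do this work concretely: one applies $S_\gamma\ot_U-$ to $\V$ (using the level-$k$ action), embeds $S_\gamma\ot_U\V\cong\Nd\ot\B$ into $U(\af,\bar k)^*$ via an explicit map $\Phi$, and identifies the image with the span of matrix coefficients of modules in the Kazhdan--Lusztig category $\mathcal O_{-\varkappa}$ -- in fact of tilting modules. The tilting-module filtration of this space of matrix coefficients (transported from the regular representation of the quantum group at a root of unity, [Z2]) has factors $V_{\nu,\bar k}^*\ot V_{-\omega_0\nu,\bar k}^c$, and it is here that the contragredient structure is genuinely pinned down, by the vanishing of $\mathrm{Ext}^1$ between Weyl and contragredient Weyl modules and the structure theory of tilting modules, not by a character or generation argument. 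Applying the inverse equivalence $\h_U(S_\gamma,-)$ (Soergel) converts the dual Weyl factors at level $-\bar k$ into Weyl factors at level $k$, yielding the increasing filtration; the decreasing one then follows from the bilinear form of [Z1], which is the one step your proposal shares with the paper. Without the realization of $S_\gamma\ot_U\V$ inside $U(\af,\bar k)^*$ and the tilting-module input, your argument establishes the theorem only at irrational level, where it is already known.
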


We need two ingredients to prove the theorem: 
one is the semi-regular module; 
the other is the regular representation of the corresponding quantum group at a root of unity.  

The standard semi-regular module was first introduced by A. Voronov in [V] 
to treat the semi-infinite cohomology of infinite dimensional Lie algebras 
as a two-sided derived functor of a functor that is neither left nor right exact. 
It was also studied rigorously by S. M. Arkhipov. 
He defined the associative algebra semi-infinite cohomology 
in the derived categories' setting (see [A1]), 
and discovered a deep semi-infinite duality 
which generalizes the classical bar duality of graded associative algebras (see [A2]). 

The semi-regular module $S_\gamma$ 
associated to a semi-infinite structure $\gamma$ of $\af$ (see [V]) 
is the semi-infinite analogue of the universal enveloping algebra $U$ of $\af$. 
In particular $S_\gamma$ is a $U$-bimodule, 
and the tensor product $S_\gamma \ot_U \V$ 
becomes a $\af_{- \bar k} \oplus \af_{\bar k}$-representation. 
We will show in Section 3 that $S_\gamma \ot_U \V$ 
can be embedded into $U^*$ as a bisubmodule. 
In fact it is spanned by the matrix coefficients of modules 
from the category $\mathcal O_{\bar k + h^\vee}$,  
defined and studied by Kazhdan and Lusztig in [KL1-4]
for $\bar k < -  h^\vee$. 

In the series of papers [KL1-4], 
Kazhdan and Lusztig defined a structure of braided category on $\mathcal O_{\bar k + h^\vee}$, 
and constructed an equivalence between the tensor category $\mathcal O_{\bar k + h^\vee}$
and the category of finite dimensional integrable representations of the quantum group
with parameter $e^{i \pi/ (\bar k+ h^\vee)}$ (in the simply-laced case). 
It motivated the author to study the structure of regular representations  
of the quantum group at roots of unity (see [Z2]). 

One of the main results in [Z2] is that 
the quantum function algebra admits an increasing filtration of (bi)submodules
such that the subquotients are isomorphic to the tensor products of the dual of Weyl modules 
$W_{- \omega_0 \lambda}^* \ot W_\lambda^*$
($\omega_0$ being the longest element in the Weyl group). 
Translating this to the affine Lie algebra, 
it means that $S_\gamma \ot_U \V$ 
admits an increasing filtration of $\af_{ -\bar k} \oplus \af_{\bar k}$-submodules
with factors isomorphic to $V_{ - \omega_0 \lambda, \bar k}^* \ot V_{\lambda, \bar k}^c$. 
Applying the functor $\h_U(S_\gamma, -)$ (see [S, Theorem 2.1]) 
to this filtration of $S_\gamma \ot_U \V$, 
we obtain an increasing filtration of $\af_k \oplus \af_{\bar k}$-submodules 
of the vertex operator algebra $\V$ 
with factors described in Theorem \ref{maintheorem}.
The corresponding decreasing filtration is obtained 
by using the non-degenerate bilinear form on $\V$ constructed in [Z1]. 

The paper is organized as follows: 
In Section 2, we follow [S] to recall the definition of semi-regular module $S_\gamma$ 
and the two functors defined with it.  
In Section 3, we embed $S_\gamma \ot_U \V$ into the dual of $U$ as a (bi)submodule. 
In Section 4, we prove the main theorem about the filtrations of the vertex operator algebra $\V$ 
using results of [Z2]. 

\section{Semi-regular module $S_\gamma$ and equivalence of categories}

The semi-regular module of a graded Lie algebra with a semi-infinite structure 
was first introduced by A. Voronov in [V], 
where it was called the ``standard semijective module''. 
It replaces the universal enveloping algebra (and its dual) in the semi-infinite theory, 
and like the universal enveloping algebra, 
it possesses left and right (semi)regular representations. 
Voronov used semijective complexes and resolutions
to define the semi-infinite cohomology of infinite dimensional Lie algebras 
as a two-sided derived functor of a functor that is intermediate
between the functors of invariants and coinvariants. 

In [A2], S. M. Arkhipov generalized the classical bar duality of graded associative algebras
to give an alternative construction of the semi-infinite cohomology of associative algebras. 
Given a graded associative algebra $A$ with a triangular decomposition, 
he introduced the endomorphism algebra $A^\sharp$ 
of a semi-regular $A$-module $S_A$ (see [A1]). 
In the case where $A$ is the universal enveloping algebra 
of a graded Lie algebra,  
the algebra $A^\sharp$ is also a universal enveloping algebra
of a Lie algebra which differs from the previous one 
by a $1$-dimensional central extension
(determined by the critical $2$-cocycle). 
In the affine Lie algebra case, 
he proved that the category of all $\af$-modules with a Weyl filtration in level $k$
is contravariantly equivalent to the analogous category in the dual level $\bar k$. 
This equivalence was obtained directly in [S], 
where W. Soergel used it to find characters of tilting modules of affine Lie algebras and quantum groups. 

Let us recall the definition of the semi-regular module from [S, Theorem 1.3]. 

Let $\mf g$ be a simple complex Lie algebra. 
Let $\af = \mf{g} \ot \C[t,t^{-1}] \oplus \C \underline c$ 
be the affine Lie algebra, 
where the commutator relations are given by
$$[ x (m), y(n) ] = [x, y] (m+n) + m\delta_{m+n, 0} (x, y) \underline c. $$
Here 
$x (n) = x \ot t^n$ for $x\in \mf{g}$, 
$(, )$ is the normalized Killing form on $\mf g$
and $\underline c$ is the center. 
Define a $\Z$-grading on $\af$ by 
$\text{deg} \,x (n) = n$ and
$\text{deg} \, \underline c =0$. 

Set $\af_{>0} =\mf g \ot t \C [ t ]$,  
$\af_{<0}= \mf g \ot t^{-1} \C [ t^{-1} ]$,  
$\af_0 = \mf g \oplus \C \underline c$ 
and $ \af_{\geq 0} = \af_{>0} \oplus \af_0$. 
Denote the enveloping algebras of $\af, \af_{\geq 0}, \af_{<0}$
by $U, B, N$. 
Obviously $U, B, N$ inherit $\Z$-gradings from the corresponding Lie algebras. 

Define a character 
$$\gamma: \af_0 = \mf{g} \oplus \C \underline c \to \C; \quad 
\gamma|_{\mf g} = 0, \quad 
\gamma ( \underline c ) = 2h^\vee,$$
where $h^\vee$ is the dual Coxeter number of $\mf g$. 
It is easy to check that $\gamma$ 
is a semi-infinite character for $\af$ (see [S, Definition 1.1]). 

For any two $\Z$-graded vector spaces $M, M'$,
define the $\Z$-graded vector space $\h_\C (M, M')$ 
with homogeneous components 
$$\h_\C (M, M')_j = \{ f \in \text{Hom}_\C (M, M') | f (M_i) \subset M'_{i+j} \}. $$

The graded dual $\Nd = \oplus_i N_i^*$ of $N$
is an $N$-bimodule via the prescriptions 
$(n f ) (n_1) = f (n_1 n)$ and $ (f n) (n_1) = f( n n_1)$ 
for any $n, n_1\in N$, $f \in \Nd$. 
We have $\Nd = \h_\C (N, \C)$, 
if we equip $\C$ with the $\Z$-grading $\C = \C_0$. 

Consider the following sequence of isomorphisms of ($\Z$-graded) vector spaces: 
$$\h _B (U, \C_\gamma \ot_ \C B) 
\,\, \tilde \to \,\, 
\h _\C (N, B) 
\,\, \tilde \gets \,\,
\Nd \ot_\C B  
\,\, \tilde \to \,\, 
\Nd \ot _N U, $$
here $\C_\gamma$ is the one-dimensional representation of $\af_{\geq 0}$
defined by the character $\gamma: \af_0 \to \C$ and the surjection 
$\af_{\geq 0} \twoheadrightarrow \af_0$, and 
$\C_\gamma \ot_\C B$ is the tensor product of these two representations
as a left $\af_{\geq 0}$-module. 
In the leftmost term, 
$U$ is considered a $B$-module via left multiplication of $B$ on $U$, 
and $\h_B (U, \C_\gamma \ot_ \C B) $ is made into a (left) $U$-module
via the right multiplication of $U$ onto itself. 
The first isomorphism is defined as the restriction to $N$ using the identification 
$\C_\gamma \ot _\C B \tilde \to B; 1\ot b \mapsto b$. 

As a vector space, 
the semi-regular module $$S_\gamma = \Nd \ot_\C B. $$
It is also a $U$-bimodule: 
the left (resp. right) $U$-action on $S_\gamma$ 
is defined via the first two (resp. last) isomorphisms. 
The semi-infinite character $\gamma$ ensures that these two actions commute. 

\begin{lemma} \label{levelshift}
$\underline c \cd s = s \cd \underline c + 2h^{\vee} s $ 
for any $s\in S_\gamma$, 
where $\underline c \cd s$ and $s \cd \underline c$ 
stand for the left and right actions of $\underline c$ on $s\in S_\gamma$. 
\end{lemma}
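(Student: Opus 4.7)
The plan is to compute both the left and right actions of $\underline c$ explicitly on a pure tensor $s = f^* \ot b \in \Nd \ot_\C B = S_\gamma$ by tracing $s$ through the chain of isomorphisms that defines the $U$-bimodule structure, and then compare.

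For the right action I work in the picture $S_\gamma \cong \Nd \ot_N U$, where the right $U$-module structure is given by right multiplication on the $U$ factor. Viewing $b \in B \subset U$ and using that $\underline c$ is central in $U$,
$$s \cd \underline c = f^* \ot b \underline c = f^* \ot \underline c b,$$
and since $\underline c b \in B$, the preimage in $\Nd \ot_\C B$ under the natural map $\Nd \ot_\C B \to \Nd \ot_N U$ is simply $f^* \ot \underline c b$. So in $S_\gamma$, $s \cd \underline c = f^* \ot \underline c b$.

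For the left action I work in the picture $S_\gamma \cong \h_B (U, \C_\gamma \ot_\C B)$. Tracing the chain of isomorphisms, $s$ corresponds to the unique $B$-linear map $\tilde g_s : U \to \C_\gamma \ot B$ determined by $\tilde g_s (n) = 1 \ot f^*(n) b$ for $n \in N$, where I use the PBW identification of $U$ with $B \ot N$ to see $U$ as a free left $B$-module with basis $N$. The left $U$-action is given by $(u \cd \tilde g_s)(v) = \tilde g_s(vu)$, so for $v = n \in N$ the centrality of $\underline c$ and $B$-linearity of $\tilde g_s$ yield
$$(\underline c \cd \tilde g_s)(n) = \tilde g_s(n \underline c) = \tilde g_s(\underline c n) = \underline c \cd \tilde g_s(n) = \underline c \cd (1 \ot f^*(n) b).$$
The crucial step is that $\af_{\geq 0}$ acts on $\C_\gamma \ot_\C B$ as a tensor product of representations, producing the level-shift term: for any $b' \in B$,
$$\underline c \cd (1 \ot b') = \gamma(\underline c)(1 \ot b') + 1 \ot \underline c b' = 1 \ot (2 h^\vee b' + \underline c b').$$
Pulling back to $\Nd \ot_\C B$ then gives $\underline c \cd s = 2 h^\vee (f^* \ot b) + f^* \ot \underline c b = 2 h^\vee s + s \cd \underline c$, as claimed.

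The only real obstacle is keeping careful track of which $B$-module structure is in play at each stage. The shift by $2 h^\vee$ emerges precisely because the identification $\C_\gamma \ot_\C B \to B,\; 1 \ot b' \mapsto b'$ used in restricting to $N$ is only a vector-space isomorphism, not one of $\af_{\geq 0}$-modules; the twist by the character $\gamma$ on the $\C_\gamma$ factor feeds $\gamma(\underline c) = 2 h^\vee$ into the left-action side, while the right-action side sees $\underline c$ simply as a central element. That asymmetry is exactly the semi-infinite character property of $\gamma$ at work.
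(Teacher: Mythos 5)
Your proof is correct, and it is essentially the paper's own argument: the paper dismisses the lemma as ``easily verified,'' and your computation --- tracing $\underline c$ through the defining chain of isomorphisms, with the right action giving $f^*\ot \underline c\, b$ and the left action picking up the extra term $\gamma(\underline c)=2h^\vee$ from the tensor-product $\af_{\geq 0}$-action on $\C_\gamma\ot_\C B$ --- is precisely the intended unpacking of the definitions. No gaps.
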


\begin{proof}
Easily verified. 
\end{proof}

\begin{prop} {[S, Theorem 1.3]} \label {lrsr}
The map $\iota: \Nd \hookrightarrow S_\gamma; f \mapsto f \otimes 1$
is an inclusion of $N$-bimodules. 
The maps $ U \ot_N \Nd \to S_\gamma; u \ot f \mapsto u \cd \iota (f)$
and $\Nd \ot_N U \to S_\gamma; f \ot u \mapsto \iota (f) \cd u$
are bijections. 
\end{prop}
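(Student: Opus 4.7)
The plan is to trace $\iota(f) = f \ot 1$ through the chain of isomorphisms defining $S_\gamma$, invoking PBW throughout. The PBW theorem for $\af = \af_{<0} \oplus \af_{\geq 0}$ yields $(N,B)$- and $(B,N)$-bimodule isomorphisms $N \ot_\C B \xrightarrow{\sim} U$ and $B \ot_\C N \xrightarrow{\sim} U$ given by multiplication, which identify $\Nd \ot_N U \cong \Nd \ot_\C B$ and $U \ot_N \Nd \cong B \ot_\C \Nd$ as $\Z$-graded vector spaces. Each of these spaces is finite-dimensional in every $\Z$-degree, and the two descriptions have the same graded dimensions, so a degree-preserving map between them is an isomorphism as soon as it is injective.

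First I would check that $\iota$ is an injection of $N$-bimodules. Injectivity is immediate from $\iota(f) = f \ot 1 \in \Nd \ot_\C B$. Using $S_\gamma \cong \Nd \ot_N U$, the right $U$-action is right multiplication on the $U$-factor, so $\iota(f) \cd n = f \ot n = fn \ot 1 = \iota(fn)$ for $n \in N$. For the left $N$-action, transport $\iota(f)$ through the first two isomorphisms to the $B$-linear map $\phi_f \colon U \to \C_\gamma \ot_\C B$ determined by $\phi_f(n') = f(n') \ot 1$ for $n' \in N$. Since the left $U$-action is $(u \cd \phi)(u') = \phi(u' u)$, for $n \in N$ we compute $(n \cd \phi_f)(n') = \phi_f(n' n) = f(n' n) \ot 1 = (nf)(n') \ot 1 = \phi_{nf}(n')$, giving $n \cd \iota(f) = \iota(nf)$.

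Bijectivity of $\Nd \ot_N U \to S_\gamma$, $f \ot u \mapsto \iota(f) \cd u$, is then immediate: because the right $U$-action on $S_\gamma$ was defined so as to be right multiplication in the $\Nd \ot_N U$ picture, the map agrees with the inverse of the rightmost defining isomorphism.

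For the bijectivity of $U \ot_N \Nd \to S_\gamma$, I would use the PBW isomorphism $B \ot_\C N \xrightarrow{\sim} U$ to reduce to showing that $B \ot_\C \Nd \to S_\gamma$, $b \ot f \mapsto b \cd \iota(f)$, is a bijection of $\Z$-graded vector spaces. Tracing $b \cd \phi_f$ through the chain and using PBW to rewrite $n' b \in U$ with $B$-factors on the left, together with the $B$-linearity of $\phi_f$ relative to the twisted $B$-action on $\C_\gamma \ot_\C B$ coming from $\gamma$, one finds that $b \cd (f \ot 1) = f \ot b + (\text{terms of strictly lower } \Z\text{-degree on the } B\text{-factor})$. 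A standard upper-triangularity argument in each $\Z$-degree then forces injectivity, hence bijectivity by the dimension count above. The main obstacle is this leading-term computation: it is where the semi-infinite character $\gamma$ enters essentially, ensuring that the twisted left $B$-action is well-defined and that the principal symbol of the map has the identity form.
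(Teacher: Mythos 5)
The paper itself does not reprove this proposition (it is quoted from [S, Theorem 1.3]), so your argument has to stand on its own, and as written it has two genuine gaps, both located in the bijectivity of $U \ot_N \Nd \to S_\gamma$. The first is the finiteness claim on which your surjectivity argument rests: the graded components of these spaces are \emph{not} finite-dimensional. Indeed $B_0 = U(\af_0) \supset U(\mf g)$ is infinite-dimensional, so already the degree-zero components of $S_\gamma = \Nd \ot_\C B$, of $U \ot_N \Nd \cong B \ot_\C \Nd$ and of $\Nd \ot_N U$ are infinite-dimensional. Hence ``injective degree-preserving map between graded spaces of equal graded dimension, therefore bijective'' proves nothing, and your route to surjectivity collapses. (Only $N$ and $\Nd$ have finite-dimensional graded pieces; that is what makes $\Nd \ot_\C B \cong \h_\C(N,B)$ work, and it does not extend to $B$ or $U$.)

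The second gap is the upper-triangularity statement, which is both deferred and incorrectly formulated. You claim $b \cd (f \ot 1) = f \ot b + (\text{terms of strictly lower } \Z\text{-degree in the } B\text{-factor})$, but for $b \in U(\af_0)$, i.e.\ of $\Z$-degree $0$, the correction terms have $B$-factor of the \emph{same} $\Z$-degree: for instance $x(0)\cd(f \ot 1) = f \ot x(0) - f([x(0),\,\cdot\,]) \ot 1$, whose correction has $B$-factor $1$, again in degree $0$. So the filtration you name is not strictly decreased; one needs a finer one (e.g.\ $\Z$-degree of the $B$-factor refined by PBW length, or a PBW-type filtration on $B$), and then the correct conclusion is drawn by showing that the associated graded map is the flip $B \ot_\C \Nd \to \Nd \ot_\C B$ and deducing \emph{both} injectivity and surjectivity by induction along the filtration in each degree --- not from a dimension count. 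This leading-term analysis is the actual content of [S, Theorem 1.3], and your proposal explicitly leaves it as ``the main obstacle.'' A minor point: the semi-infinite property of $\gamma$ is not what makes the twisted $B$-action or the triangularity work (any character of $\af_0$ would do here); it is needed to make the left and right $U$-actions on $S_\gamma$ commute, which is not at issue in this statement. The remaining parts of your proposal --- injectivity of $\iota$, its compatibility with both $N$-actions, and the bijectivity of $\Nd \ot_N U \to S_\gamma$, which is essentially the definition of the right action --- are correct.
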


\begin{remark} 
The sequence of isomorphisms 
$$S_\gamma = U \ot_N \Nd \cong B \ot_\C \Nd \cong \h_\C (N, B) 
\, \tilde \to \, \h_{B-\text {right}} (U, \C_{- \gamma} \ot B )$$
induces a right $U$-map from $S_\gamma$
to $\h_{B-\text {right}} (U, \C_{- \gamma} \ot B )$. 
The right $U$-module structure of the latter 
is given by the left multiplication of $U$ on the first argument in $\h$. 
\end{remark}

Let $P^+$ be the dominant integral weights of $\mf g$ and $\lambda \in P^+$. 
Denote by $V_{\lambda, k} = \text{Ind}_{ \af_{\geq 0}}^{\af} V_\lambda$
the Weyl module induced from the finite dimensional irreducible representation 
of $\mf g$ with highest weight $\lambda$ in level $k$. 
Let $V_{\lambda, k}^*$ be the graded dual of $V_{\lambda, k}$, 
on which $\af$ acts by $ X f (v) = - f ( X v)$ 
for any $X \in \af$, $f \in V_{\lambda, k}^*$ and $v \in V_{\lambda, k}$. 

Let $\mathcal M$ (resp. $\mathcal K$) 
denote the category of all $\Z$-graded representations of $\af$, 
which are over $N$ isomorphic to finite direct sums 
of may-be grading shifted copies of $N$ (resp. $\Nd$). 
In fact $\mathcal M$ (resp. $\mathcal K$) 
consists precisely of those $\Z$-graded $\af$-modules, 
which admit a finite filtration with factors isomorphic to Weyl modules 
(resp. the dual of Weyl modules) (see [S, Remarks 2.4]).

\begin{prop}{[S, Theorem 2.1]}
The functor $S_\gamma \ot_U -: \mathcal M \to \mathcal K$ 
defines an equivalence of categories with inverse $\h_U ( S_\gamma, - )$, 
such that short exact sequences correspond to short exact sequences. 
\end{prop}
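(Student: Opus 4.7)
\emph{Overall plan.} I will realize both functors as tensor/hom over the subalgebra $N$ alone, check the adjunction unit and counit on the natural generators $N$ and $\Nd$, and then pass to all of $\mathcal M, \mathcal K$ by exactness together with the 5-lemma applied to the Weyl (resp.\ dual-Weyl) filtrations guaranteed by [S, Remarks 2.4].

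\emph{Reduction.} Proposition \ref{lrsr} yields bimodule isomorphisms $S_\gamma \cong \Nd \ot_N U$ (as $N$-left, $U$-right) and $S_\gamma \cong U \ot_N \Nd$ (as $U$-left, $N$-right). Hence for any left $U$-modules $M$ and $K$ there are natural isomorphisms of $\Z$-graded vector spaces
$$S_\gamma \ot_U M \;\cong\; \Nd \ot_N U \ot_U M \;=\; \Nd \ot_N M, \qquad \h_U(S_\gamma, K) \;\cong\; \h_U(U \ot_N \Nd, K) \;=\; \h_N(\Nd, K).$$
When $M \in \mathcal M$, so that $M$ is over $N$ a finite direct sum of grading-shifted copies of $N$, the right-hand side of the first isomorphism is a finite direct sum of grading-shifted copies of $\Nd$, placing $S_\gamma \ot_U M$ in $\mathcal K$. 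Dually, when $K \in \mathcal K$, the graded duality $\h_N(\Nd, \Nd) \cong N$ as $N$-bimodules---valid because $N$ is positively graded with $N_0 = \C$ and finite-dimensional graded pieces---shows that $\h_N(\Nd, K)$ is a finite direct sum of grading-shifted copies of $N$ over $N$, hence lies in $\mathcal M$.

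\emph{Adjunction on generators, extension by filtrations.} The tensor-hom adjunction supplies a unit $\eta_M \colon M \to \h_U(S_\gamma, S_\gamma \ot_U M)$ and a counit $\epsilon_K \colon S_\gamma \ot_U \h_U(S_\gamma, K) \to K$. On the generators, $\Nd \ot_N N \cong \Nd$ and $\h_N(\Nd, \Nd) \cong N$ immediately give that $\eta_N$ and $\epsilon_{\Nd}$ are isomorphisms. Since $\mathcal M$ consists of $N$-free modules, the functor $\Nd \ot_N -$ is exact on $\mathcal M$; symmetrically, $\h_N(\Nd, -)$ is exact on $\mathcal K$. Combining exactness with induction on the length of a Weyl (resp.\ dual-Weyl) filtration and an application of the 5-lemma to $\eta_M$ and $\epsilon_K$ extends the isomorphism to all objects and shows that short exact sequences correspond to short exact sequences.

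\emph{Main obstacle.} The delicate step is compatibility with the full $\af$-structure rather than merely the $N$-structure. The left $U$-action on $S_\gamma$ is defined through the chain of identifications beginning with $\h_B(U, \C_\gamma \ot_\C B)$ and is not the evident action on $\Nd \ot_N U$; one must verify that after transport through the isomorphism $S_\gamma \cong \Nd \ot_N U$ the resulting $\af$-action on $\Nd \ot_N M$ is $U$-linearly isomorphic to $S_\gamma \ot_U M$, so that the equivalence is one of $\af$-module categories (in particular, $\eta_M$ and $\epsilon_K$ must be $U$-linear). This is precisely the point at which the semi-infinite character $\gamma$ and the resulting level shift of Lemma \ref{levelshift} intervene, and it is the technical heart of the argument.
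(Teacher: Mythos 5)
Your first three paragraphs are, in substance, the paper's own proof: the paper's argument consists precisely of the observation that $S_\gamma \ot_U - \cong \Nd \ot_N -$ and $\h_U(S_\gamma,-) \cong \h_N(\Nd,-)$ by Proposition \ref{lrsr}, after which everything reduces to the fact that objects of $\mathcal M$ (resp. $\mathcal K$) are over $N$ finite direct sums of shifted copies of $N$ (resp. $\Nd$); the statement itself is in any case quoted from [S, Theorem 2.1]. The point to correct is your final paragraph: the ``main obstacle'' you leave unresolved is not actually part of the argument. Both functors are defined from the outset via the $U$-bimodule $S_\gamma$ (the semi-infinite character enters only in guaranteeing that the left and right actions commute, i.e.\ that $S_\gamma$ is a bimodule at all), so the unit $\eta_M$ and counit $\epsilon_K$ are $U$-linear for purely formal reasons, by the ordinary tensor--hom adjunction for a bimodule; there is no need to transport the left $\af$-action through $S_\gamma \cong \Nd \ot_N U$ and identify the resulting action on $\Nd \ot_N M$ with anything. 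The identifications $S_\gamma \ot_U M \cong \Nd \ot_N M$ and $\h_U(S_\gamma,K) \cong \h_N(\Nd,K)$ are used only as left-$N$-linear isomorphisms (left-$N$-linearity is immediate from Proposition \ref{lrsr}, since $\iota$ is a map of $N$-bimodules), and that is all that is needed to see that the essential images land in $\mathcal K$ and $\mathcal M$ and that $\eta_M$ and $\epsilon_K$ are bijective. The level shift of Lemma \ref{levelshift} is irrelevant to this proposition; it matters only for identifying which Weyl modules one gets, as in Corollary \ref{tiny}. (A small slip: $N = U(\af_{<0})$ is non-positively graded in the paper's conventions; what your duality $\h_N(\Nd,\Nd) \cong N$ really uses is $N_0 = \C$ together with finite-dimensional homogeneous components, which do hold.) With the last paragraph removed, your proof is complete and follows the same route as the paper's.
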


\begin{proof}
Note that $S_\gamma \ot_U - \cong \Nd \ot_N -$ 
and $\h_U (S_\gamma, -) \cong \h_N (\Nd, - )$
by Proposition \ref{lrsr}. 
\end{proof}

\begin{prop} \label{tens}
Let $E$ be a $\Z$-graded $B$-module bounded from below, 
the functor $S_\gamma \ot_U -$ maps $U \ot_B E$ 
to $\h_B (U, \C_\gamma \ot E)$. 
\end{prop}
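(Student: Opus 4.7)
The plan is to extend the chain of isomorphisms that defines $S_\gamma$ from the case $\C_\gamma \ot_\C B$ to the case $\C_\gamma \ot_\C E$. The starting point is the tensor-cancellation isomorphism
$$S_\gamma \ot_U (U \ot_B E) \;\cong\; S_\gamma \ot_B E$$
of left $U$-modules (here $S_\gamma$ is regarded as a right $B$-module by restriction of its right $U$-action); applying the defining left-$U$- right-$B$-bilinear isomorphism $S_\gamma \cong \h_B(U, \C_\gamma \ot_\C B)$, the right hand side becomes $\h_B(U, \C_\gamma \ot B) \ot_B E$.

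I would then introduce the natural map
$$\Phi : \h_B(U, \C_\gamma \ot B) \ot_B E \longrightarrow \h_B(U, \C_\gamma \ot E), \qquad \Phi(f \ot e)(v) = f(v) \cd e,$$
where $f(v) \cd e$ denotes the image of $f(v) \ot e$ under the canonical map $(\C_\gamma \ot B) \ot_B E \to \C_\gamma \ot E$. Direct checks show that $\Phi$ is well-defined over $\ot_B$ and that $\Phi$ intertwines the left $U$-actions on source and target, which in both cases are given by right multiplication on the first $\h_B$-argument, $(u \cd h)(v) = h(vu)$.

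To prove that $\Phi$ is an isomorphism I invoke the PBW decomposition. As a left $B$-module, $U$ is free with basis $N$ (via the PBW isomorphism $U \cong B \ot_\C N$), so restriction to $N$ gives a natural isomorphism $\h_B(U, X) \tilde\to \h_\C(N, X)$ for any left $B$-module $X$. Under this identification $\Phi$ becomes the natural map $\h_\C(N, B) \ot_B E \to \h_\C(N, E)$. The hypothesis that $E$ is bounded below, together with the local finite-dimensionality and boundedness above of $N$, produces a canonical isomorphism $\h_\C(N, X) \cong \Nd \ot_\C X$ for $X \in \{B, E\}$, compatibly with the right $B$-module structure when $X = B$. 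Both source and target of $\Phi$ are therefore canonically identified with $\Nd \ot_\C E$, under which identifications $\Phi$ becomes the identity.

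The main obstacle is really just the bookkeeping of the various left-$U$- and right-$B$-module structures through the chain of isomorphisms; no new ideas beyond those of the construction of $S_\gamma$ are needed. The hypothesis that $E$ be bounded from below is used only at the last step, to guarantee the canonical identification $\h_\C(N, E) \cong \Nd \ot_\C E$.
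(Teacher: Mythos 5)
Your proof is correct and takes essentially the same route as the paper: both pass through the chain $S_\gamma \ot_U (U \ot_B E) \cong \Nd \ot_\C E \cong \h_\C(N,E) \cong \h_B(U, \C_\gamma \ot E)$, relying on PBW freeness of $U$ over $B$ with basis $N$ and on the boundedness of $E$ from below (together with the finite-dimensional, bounded-above grading of $N$) to identify $\h_\C(N,E)$ with $\Nd \ot_\C E$. Your explicit intertwiner $\Phi(f \ot e)(v) = f(v) \cd e$ merely spells out the compatibility of the left $U$-module structures that the paper records as ``straightforward to check.''
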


\begin{proof}
Similar to the construction of the semi-regular module $S_\gamma$,
consider the following sequence of isomorphisms of $\Z$-graded vector spaces: 
$$S_\gamma \ot_U ( U \ot_B E ) 
\,\, \cong \,\,           \Nd \ot_\C E 
\,\, \tilde \to \,\,       \h_\C ( N, E )
\,\, \tilde \gets \,\,   \h_B (U, \C_\gamma \ot E). $$
It is straightforward to check that, under these isomorphisms,
the (left) $U$-module structure of $S_\gamma \ot_U ( U \ot_B E )$ 
agrees with that of $ \h_B (U, \C_\gamma \ot E) $. 
\end{proof}

\begin{remark} \label{gtens}
In general for any $\Z$-graded $B$-module $E'$, 
the inclusion 
$S_\gamma \ot_U ( U \ot_B E' ) \cong \Nd \ot_\C E' 
\hookrightarrow \h_B (U, \C_\gamma \ot E' )$
is a $U$-map. 
\end{remark}

\begin{prop} \label{hom}
Let $F$ be a $\Z$-graded $B$-module bounded from above, 
then the functor $\h_U (S_\gamma, - )$
maps $\h_B (U, F) $ 
to $U \ot_B (\C_{-\gamma} \ot F)$. 
\end{prop}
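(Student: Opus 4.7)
The plan is to dualize the chain of isomorphisms constructed in the proof of Proposition \ref{tens}, exploiting that $F$ is now bounded from above so that the natural infinite products will reduce to finite direct sums in the range opposite to that used there.

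First I would apply Frobenius reciprocity for the restriction-coinduction adjunction between $U$-modules and $B$-modules to obtain
\[
\h_U(S_\gamma, \h_B(U, F)) \cong \h_B(S_\gamma, F),
\]
viewing $S_\gamma$ as a left $B$-module on the right hand side, and letting the remaining left $U$-module structure be induced by the right $U$-action on the bimodule $S_\gamma$. By Proposition \ref{lrsr} the map $U \ot_N \Nd \to S_\gamma$ is a left $U$-module isomorphism; combined with the Poincar\'e-Birkhoff-Witt identification $U \cong B \ot_\C N$ as a $(B, N)$-bimodule this gives
\[
S_\gamma \cong B \ot_\C \Nd
\]
as a left $B$-module, and therefore $\h_B(S_\gamma, F) \cong \h_\C(\Nd, F)$.

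The next step uses the boundedness hypothesis. In each fixed degree,
\[
\h_\C(\Nd, F)_j \;\cong\; \prod_{i \geq 0} N_{-i} \ot_\C F_{j+i},
\]
and because $F_m$ vanishes for $m$ large, only finitely many factors are nonzero, so the product collapses to the direct sum $(N \ot_\C F)_j$. On the other side, the alternate PBW identification $U \cong N \ot_\C B$ as an $(N, B)$-bimodule yields
\[
U \ot_B(\C_{-\gamma} \ot F) \cong N \ot_\C F
\]
as a graded vector space. Thus both sides of the proposition are abstractly isomorphic to $N \ot_\C F$.

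What remains — and what I expect to be the main bookkeeping obstacle — is to verify that this composite graded vector space isomorphism intertwines the two left $U$-module structures. On the left, the $U$-action is inherited from the right $U$-action on $S_\gamma$, which under the identification $S_\gamma \cong \Nd \ot_N U$ is right multiplication on the second factor; on the right, it is the action on the induced module, twisted by $-\gamma$ on the $B$-part. The twist by $-\gamma$ is exactly what cancels the $+\gamma$ twist implicit in the identification $S_\gamma \cong \h_B(U, \C_\gamma \ot B)$ that was used to define the left $U$-action on $S_\gamma$. I would carry out the verification by chasing an element through the Frobenius adjunction and the two PBW isomorphisms, separately checking the actions of $N \subset U$ and $B \subset U$, thereby directly mirroring (and dualizing) the check at the end of the proof of Proposition \ref{tens}.
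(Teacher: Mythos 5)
Your reduction to a graded vector-space isomorphism is correct and is essentially the same chain the paper uses: after the coinduction adjunction and the freeness $S_\gamma \cong B \ot_\C \Nd$ as a left $B$-module (the paper instead uses $\h_U(S_\gamma,-)\cong \h_N(\Nd,-)$ and $\h_B(U,F)\cong \h_\C(N,F)$, but both routes land at $\h_\C(\Nd,F)$), the hypothesis that $F$ is bounded from above together with $\dim N_{-i}<\infty$ collapses $\h_\C(\Nd,F)$ to $N\ot_\C F\cong U\ot_B(\C_{-\gamma}\ot F)$, exactly as you say. The genuine gap is that the only nontrivial assertion of the proposition --- that this composite respects the left $U$-module structures --- is never established: your last paragraph is a plan, not a proof. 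Moreover the plan understates the difficulty. The residual $U$-action on $\h_B(S_\gamma,F)$ goes through the right action of $U$ on $S_\gamma$, and such a $\psi$ is determined by its values on $\iota(\Nd)$; evaluating $(u\cd\psi)(\iota(f))=\psi(\iota(f)\cd u)$ forces you to rewrite $\iota(f)\cd u$ back in the left coordinates $B\ot_\C\Nd$, i.e.\ to convert between the two presentations of Proposition \ref{lrsr}. That conversion is precisely where the semi-infinite character and all the commutator terms live, so ``the $-\gamma$ twist cancels the $+\gamma$ twist'' is a heuristic, not a verification, and the element chase you defer is the actual content of the statement.

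The paper closes this gap without any chase: it observes that the vector-space isomorphism $U\ot_B(\C_{-\gamma}\ot F)\to \h_U(S_\gamma,\h_B(U,F))$ coincides with the composition of two canonical left $U$-maps, namely the unit $U\ot_B(\C_{-\gamma}\ot F)\to \h_U\bigl(S_\gamma,\,S_\gamma\ot_U(U\ot_B(\C_{-\gamma}\ot F))\bigr)$ followed by $\h_U(S_\gamma,-)$ applied to the inclusion of Remark \ref{gtens} with $E'=\C_{-\gamma}\ot F$ (so that $\C_\gamma\ot E'\cong F$). Equivariance is then inherited from Remark \ref{gtens}/Proposition \ref{tens}, and all that needs checking is that the two maps agree, a computation involving no module structure. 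Either adopt this factorization, or actually carry out your chase in full (checking the $\af_{\geq 0}$- and $\af_{<0}$-actions separately and exhibiting where $\gamma$ enters); as written, the module-theoretic half of the proposition is missing.
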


\begin{proof}
The isomorphism of vector spaces 
$U \ot_B (\C_{-\gamma} \ot F) \, \tilde \to \, \h_U (S_\gamma, \h_B (U, F) )$, 
induced from 
$$\h_U (S_\gamma, \h_B (U, F) )  
\cong \h_N (\Nd, \h_\C (N, F )) $$ $$
\cong \h_\C (\Nd, F)
\cong N \ot_\C F
\cong U \ot_B (\C_{-\gamma} \ot F), $$
agrees with the composition of (left) $U$-maps
$$U \ot_B (\C_{-\gamma} \ot F) \to 
\h_U (S_\gamma, S_\gamma \ot_U (U \ot_B (\C_{-\gamma} \ot F) ) ) 
\to \h_U (S_\gamma, \h_B (U, F) ), $$
hence it is a $U$-isomorphism. 
\end{proof}

In particular $S_\gamma \ot_U -$ transforms Weyl modules to the dual of Weyl modules, 
and $\h_U (S_\gamma, -)$ transforms the latter to the former (both with a level shift). 

\begin{corollary} \label{tiny}
$S_\gamma \ot_U V_{\lambda, k} \cong V_{\lambda^*, \bar k}^*$, 
and $\h_U (S_\gamma, V_{\lambda, \bar k}^*) \cong V_{\lambda^*,  k}$, 
here $\lambda^*$ denotes the highest weight of $V_\lambda^*$. 
\end{corollary}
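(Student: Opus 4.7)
The plan is to apply Propositions \ref{tens} and \ref{hom} directly to the Weyl modules and their graded duals, and then to identify the resulting $B$-modules with the claimed ones by matching the finite-dimensional $\mf g$-structure together with the scalar by which $\underline c$ acts.

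For the first isomorphism, I would start from the definition $V_{\lambda, k} = U \ot_B V_\lambda$, where $V_\lambda$ is viewed as a $B$-module with $\af_{>0}$ acting trivially, $\mf g$ acting via the irreducible representation of highest weight $\lambda$, and $\underline c$ acting by $k$. Proposition \ref{tens} then gives
$$S_\gamma \ot_U V_{\lambda, k} \;\cong\; \h_B(U, \C_\gamma \ot V_\lambda).$$
The next step is to identify the $B$-module $\C_\gamma \ot V_\lambda$ with $V_{\lambda^*}^*$ at level $\bar k$: as $\mf g$-modules, one has $V_\lambda \cong (V_\lambda^*)^* \cong V_{\lambda^*}^*$; on $\af_{>0}$ both sides are annihilated; and $\underline c$ acts by $2h^\vee + k = -\bar k$ on the left and by $-\bar k$ on the right (since the graded dual flips the sign of the $\underline c$-scalar, taking $\bar k$ to $-\bar k$). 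Applying the adjunction $\h_B(U, W^*) \cong (U \ot_B W)^*$ to $W = V_{\lambda^*}$ at level $\bar k$ then yields $\h_B(U, V_{\lambda^*}^*) \cong V_{\lambda^*, \bar k}^*$, which completes the first claim.

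For the second isomorphism, I would first rewrite $V_{\lambda, \bar k}^* \cong \h_B(U, V_\lambda^*)$ by the same adjunction, where $V_\lambda^*$ is now the $B$-module with the $\mf g$-dual action and $\underline c$ acting by $-\bar k$. Proposition \ref{hom} gives
$$\h_U(S_\gamma, V_{\lambda, \bar k}^*) \;\cong\; U \ot_B (\C_{-\gamma} \ot V_\lambda^*).$$
Here $\C_{-\gamma} \ot V_\lambda^*$ is the $B$-module on which $\underline c$ acts by $-2h^\vee + (-\bar k) = k$ and $\mf g$ acts as on $V_\lambda^* \cong V_{\lambda^*}$. Thus $\C_{-\gamma} \ot V_\lambda^* \cong V_{\lambda^*}$ in level $k$, and we recover $U \ot_B V_{\lambda^*} = V_{\lambda^*, k}$.

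There is no real obstacle beyond careful bookkeeping of two conventions: (i) the graded dual negates the central charge, and (ii) tensoring with $\C_{\pm\gamma}$ shifts it by $\pm 2h^\vee$; in both parts these combine with Lemma \ref{levelshift} to give exactly the advertised level on each side. The only other ingredient is the elementary identification $V_\lambda \cong V_{\lambda^*}^*$ of finite-dimensional $\mf g$-modules.
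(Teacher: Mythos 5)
Your proposal is correct and takes essentially the same route as the paper: apply Propositions \ref{tens} and \ref{hom} to $V_{\lambda, k} = U \ot_B V_\lambda$ and $V_{\lambda, \bar k}^* \cong \h_B(U, V_\lambda^*)$, then identify the resulting modules by matching the $\mf g$-action and the scalar by which $\underline c$ acts. You merely make explicit the adjunction $\h_B(U, W^*) \cong (U \ot_B W)^*$ and the level bookkeeping with $\C_{\pm\gamma}$ that the paper's one-line proof leaves implicit.
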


\begin{proof}
Note that $U \ot_B V_\lambda = V_{\lambda, k}$ 
and $\h_B (U, \C_\gamma \ot V_\lambda) \cong V_{\lambda^*, \bar k}^*$
if $\underline c$ acts on $V_\lambda$ as scalar multiplication by $k$. 
\end{proof}

\section{Realization of $S_\gamma \ot_U \V$ inside $U^*$}

Fix a complex number $k$, 
and let $\V = U\A_{\mf g, k}$ 
be the vertex operator algebra associated to the vertex algebroid 
$\A_{\mf{g}, k}$ (see [AG], [GMS1, 2], [Z1]).  
Note that in [Z1], 
we used $\V$ to denote the vertex operator algebra for generic values of $k \notin \Q$, 
but here we adopt this notation with no restriction on $k$. 

The vertex operator algebra $\V$ admits two commuting actions of $\af$
in dual levels $k, \bar k = -2h^\vee -k$. 
It follows from Lemma \ref{levelshift} that $S_\gamma \ot _U \V$, 
using the $\af_k$-module structure of $\V$, 
becomes a $\af_{- \bar k} \oplus \af_{\bar k}$-representation. 
Define $U(\af, k) = U(\af) / (\underline c - k) U(\af)$. 
Our goal is to construct  an embedding of $U$-bimodules 
$$\Phi: S_\gamma \ot _U \V \hookrightarrow U(\af, \bar k)^*. $$

Let $\B = \oplus_{i \leq 0} \B_i$ (denoted by ``$B$'' with opposite grading in [Z1]) 
be the commutative vertex subalgebra of $\V$ generated by $A$, 
where $A$ is the commutative algebra of regular functions on an affine connected algebraic group $G$ with Lie algebra $\mf g$. 
Recall that $\B$ is closed under the actions of $U(\af_{\geq 0}, k)$ and $U(\af_{\geq 0}, \bar k)$. 
As a $\af_k$-module, we have 
$\V \cong U \ot _B \B \cong N \ot_\C \B$
(see e.g. [Z1, Proposition 3.16]). 
Since $S_\gamma \cong \Nd \ot_N U$ as a right $U$-module, 
we have  
$$S_\gamma \ot_U \V \cong \Nd \ot_N \V \cong \Nd \ot_\C \B. $$

Define a functional $\ep: \B \to \C$ as follows: 
$\ep|_{\B _{\leq 1}} = 0$ and its restriction to $\B_0 = A$ 
is the evaluation of functions at identity. 

Multiplication induces isomorphism of vector spaces: 
$N \ot_\C B \cong U$, 
hence any $u \in U$ can be written as $u = u_{<0} u_{\geq 0}$ 
with $u_{<0} \in N$ and $u_{\geq 0} \in B$. 

Let $\bar{}: U \to U; u \to \overline u$
be the anti-involution of $U$
determined by $ - \text{Id}: \af \to \af$. 
Define a map 
$$\Phi: S \ot_U \V \to U^*$$
as follows: for any $f \in \Nd$, $b\in \B$, 
$$\Phi (f \ot b)  ( u_{<0} u_{\geq 0}) = f (\overline{ u_{<0}}) \ep ( u_{\geq 0}^r \cdot b), $$
here $u_{\geq 0}^r \cdot b$ means the $U(\af_{\geq 0}, \bar k)$-action on $\B$. 
In fact $\Phi (f \ot b) \in U(\af, \bar k) ^*$. 

The dual space $U^*$ is a $U$-bimodule via the recipes  
$ (u \cd g) (u_1) = g(u_1 u)$ and $ (g \cd u) (u_1) = g ( u u_1)$ 
for any $u, u_1\in U$, $g \in U^*$. 

\begin{theorem} \label{main}
For any $u\in U$ and $f \ot b \in \Nd \ot_\C \B$ ( $\cong S_\gamma \ot_U \V$), 
we have 
$$\Phi ( u^l \cd (f \ot b)) = ( \Phi (f \ot b)) \cd \bar u,  $$ $$
\Phi ( u^r \cd (f \ot b)) = u \cd ( \Phi (f \ot b)), $$
here $u^l \cd (f \ot b)$, $u^r \cd (f \ot b)$ 
stand for the $\af_{- \bar k}$- and $\af_{ \bar k}$-actions 
on $S_\gamma \ot_U \V$ respectively. 
\end{theorem}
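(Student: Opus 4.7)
The plan is to verify both identities on generators of $U$ and then extend by multiplicativity. If the left identity holds for $u_1, u_2 \in U$, then
$$\Phi((u_1 u_2)^l \cdot (f \otimes b)) = \Phi(u_2^l \cdot (f \otimes b)) \cdot \overline{u_1} = \Phi(f \otimes b) \cdot \overline{u_2}\, \overline{u_1} = \Phi(f \otimes b) \cdot \overline{u_1 u_2},$$
using $\overline{u_1 u_2} = \overline{u_2}\, \overline{u_1}$, and the same argument works for the right identity. So it suffices to check both identities for $u = X$ running over the Lie algebra $\af = \af_{<0} \oplus \mf g \oplus \C \underline c \oplus \af_{>0}$, case by case.

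Under the identification $S_\gamma \otimes_U \V \cong \Nd \otimes_\C \B$ of Section 3, two cases are essentially immediate. For $X \in \af_{<0}$ acting on the left, unwinding the $\Nd \otimes_N U$-presentation of $S_\gamma$ gives $X^l \cdot (f \otimes b) = (Xf) \otimes b$ with $(Xf)(n) = f(nX)$; the left identity then follows directly from the definition of $\Phi$ together with $\overline{X u_{<0}} = -\overline{u_{<0}} X$. For $X \in \af_{\geq 0}$ acting on the right, $X^r \cdot b$ lies in $\B$ and $u_{\geq 0} X$ lies in $B$, so both sides of the right identity collapse to $f(\overline{u_{<0}}) \epsilon((u_{\geq 0} X)^r \cdot b)$. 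The central case $X = \underline c$ is handled by Lemma \ref{levelshift}: the left action of $\underline c$ on $S_\gamma \otimes_U \V$ is the scalar $-\bar k$, which matches the right-hand side since $\Phi(f \otimes b) \in U(\af, \bar k)^*$ and $\overline{\underline c} = -\underline c$.

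The two remaining cases — the left action by $X \in \af_{\geq 0} \setminus \C \underline c$ and the right action by $X \in \af_{<0}$ — form the main obstacle. For the first, one uses the $\h_B(U, \C_\gamma \otimes B)$-presentation of the left $U$-action on $S_\gamma$; the character $\gamma$ enters through the $B$-action on $\C_\gamma \otimes B$ and is exactly what converts the level-$k$ action on $\V$ into a level-$(-\bar k)$ action on $S_\gamma \otimes_U \V$, matching the undecorated right multiplication on $U^*$. For the second, writing $X^r \cdot b = \sum_i n_i \otimes \tilde b_i$ in $\V \cong N \otimes_\C \B$ gives $X^r \cdot (f \otimes b) = \sum_i (fn_i) \otimes \tilde b_i$ in $\Nd \otimes_\C \B$; comparing against the PBW decomposition $u_{\geq 0} X = \sum_j m_j \beta_j$ in $U$ (with $m_j \in N,\ \beta_j \in B$), the required identity reduces, after varying $f$ and $u_{<0}$, to the equality
$$\sum_i \epsilon(u_{\geq 0}^r \cdot \tilde b_i)\, n_i = \sum_j \epsilon(\beta_j^r \cdot b)\, \overline{m_j}$$
as elements of $N$. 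This in turn follows from the commutativity $u_{\geq 0}^r \cdot (X^r \cdot b) = (u_{\geq 0} X)^r \cdot b$ of the two $\af$-actions on $\V$, combined with the $\V \cong U \otimes_B \B$-presentation and the defining properties of $\epsilon$ (vanishing off $\B_0$ and evaluation at the identity of $G$ on $\B_0 = A$).
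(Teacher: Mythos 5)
Your reduction to Lie-algebra generators and your treatment of the easy cases (left action of $\af_{<0}$, right action of $\af_{\geq 0}$, the central element) are fine, and your reformulation of the hard right-action case as the identity $\sum_i \ep(u_{\geq 0}^r \cd \tilde b_i)\, n_i = \sum_j \ep(\beta_j^r \cd b)\, \overline{m_j}$ in $N$ is a correct bookkeeping step. But at both of the points you yourself identify as the main obstacle, the argument is asserted rather than given, and what is missing is precisely the substantive input. For the left action of $\af_{\geq 0}$: the map $\Phi$ is defined through $\ep(u_{\geq 0}^r \cd b)$, i.e.\ through the level-$\bar k$ action of $B$ on $\B$, whereas the $\h_B(U, \C_\gamma \ot \B)$-presentation of the left $U$-structure on $S_\gamma \ot_U \V$ is built from the level-$k$ action $u_{\geq 0}^l \cd b$. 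The $\gamma$-twist only accounts for the shift of the central character ($k \mapsto k+2h^\vee = -\bar k$); it says nothing about the non-central part of $\af_{\geq 0}$. One still has to prove that $\ep$ intertwines the two actions, namely $\ep(\beta(u_{\geq 0})^l \cd b) = \ep(\overline{u_{\geq 0}}^r \cd b)$, and this is a genuinely nontrivial fact about the vertex algebroid: it rests on the relation $\tau_j(n)^l \cd b = \sum_{i,p\geq 0} a^{ij}_{(-1-p)} \tau_i(n+p)^r \cd b$ together with $\ep(a^{ij}) = -\delta_{ij}$ and the commutativity of the two actions (this is the paper's Lemma \ref{epbl}, feeding into Proposition \ref{PhiTh}). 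Your sentence about $\gamma$ ``exactly converting'' levels does not produce this.

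For the right action of $\af_{<0}$, the same gap reappears. Extracting your $N$-identity from the associativity relation $u_{\geq 0}^r \cd (X^r \cd b) = (u_{\geq 0}X)^r \cd b$ requires applying the map $\mathrm{id}\ot\ep : N \ot \B \cong \V \to N$ to $\sum_j m_j^r \cd (\beta_j^r \cd b)$, and for that you need the key fact that for $m \in N$ and $b' \in \B$, writing $m^r \cd b'$ in normal form in $N \ot \B$ and applying $\mathrm{id}\ot\ep$ yields $\ep(b')\,\overline{m}$. This is not a ``defining property of $\ep$'': commutativity of the two actions and the definition of $\ep$ alone cannot manufacture the anti-involution $\overline{m}$. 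It follows only from the explicit formula for the dual-level action of $\af_{-1}$ on $\B$ inside $\V$, namely $\tau_i(-1)^r \cd b' = \sum_j \tau_j(-1)^l \cd (a^{ij} b') + \tilde\rho(\tau_i(-1)) b'$, combined with $\ep(a^{ij}) = -\delta_{ij}$, multiplicativity of evaluation at the identity on $A$, and the weight argument killing the $\tilde\rho$-terms; one then inducts over monomials in $\af_{-1}$ (this is the content of the paper's computation with $[u_0^r, a^{ij}_{(-1)}] = \sum_s a^{sj}_{(-1)} F^{i,s}(u_0)^r$ and $[u_{>0}^r, a^{ij}_{(-1)}]|_\B = 0$). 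In short, your residual identity is essentially the hard case restated, and the proposal never brings in the vertex-algebroid relations ($\tau_i^R = a^{ij}\tau_j^L$, $\ep(a^{ij}) = -\delta_{ij}$, [Z1, Lemma 3.14 (9),(10)]) without which neither hard case can be closed.
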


To prove the theorem, we need some preparations. 
First, let 
$$\Th: S \ot_U \V \to \h_B (U, \C_\gamma \ot \B)$$
be the (left) $U$-map described in Remark \ref{gtens}
(taking $E' = \B$). 
Note that we regard $\V, \B$ as non-positively graded, 
i.e. taking the opposite of the grading 
defined by the conformal weights of the vertex operator algebra $\V$. 
Here $\B$ is regarded as a left $B$-module via the $U(\af_{\geq 0}, k)$-action on $\B$, 
and $\Th$ is a $U(\af, - \bar k)$-map. 

Following [GMS2, Z1], 
let $\tau_i$ be an orthonormal basis of $\mf g$ 
with respect to the normalized Killing form $(, )$. 
Let $C_{ijk}$ be the structure constants 
determined by $[ \tau_i, \tau_j ] = C_{ijk} \tau_k$. 
We identify $\mf g$ with the tangent space to the identity of $G$. 
Let  $\tau_i^L$ (resp. $\tau_i^R$) 
be the left (resp. right) invariant vector fields 
valued $ \tau_i$ (resp. $- \tau_i$) at the identity, 
there exist regular functions $a^{ij} \in A$
such that $\tau_i^R = a ^{ij} \tau_j^L$ 
and $\ep (a ^{ij} ) = - \delta_{ij}$.

\begin{lemma}\label{epbl}
Let $\beta: B \to B$ be the automorphism 
which restricts to $\af_{\geq 0}$ as 
$X \mapsto \gamma(X) + X$, 
then for any $u_{\geq 0} \in B$ and $b\in \B$, we have 
$\ep ( \beta (u_{\geq 0}) ^l \cd b) = \ep ( \overline{u_{\geq 0}} ^r \cd b)$, 
here $ \beta (u_{\geq 0}) ^l \cd b$,  $\overline{u_{\geq 0}} ^r \cd b$
denote the $U(\af_{\geq 0}, k)$- and $U(\af_{\geq 0}, \bar k)$-actions on $\B$. 
\end{lemma}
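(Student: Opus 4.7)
The plan is to reduce to the case $u_{\geq 0} \in \af_{\geq 0}$ via a length induction, and then verify the identity separately on the central generator $\underline c$ and on the modes $\tau_i(n)$ with $n \geq 0$.

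For the reduction, write $u_{\geq 0} = X_1 v$ with $X_1 \in \af_{\geq 0}$ and $v \in B$ of shorter length, and set $b' = \beta(v)^l \cd b$. Because $\beta(v) \in B$ and the $\cd^l$-action of $U(\af_{\geq 0}, k)$ preserves $\B$, we have $b' \in \B$, so the generator case gives $\ep(\beta(X_1)^l \cd b') = \ep(\overline{X_1}^r \cd b')$. Since the anti-involution $u \mapsto \overline u$ restricts to $-\mathrm{Id}$ on $\af$, $\overline{X_1} = -X_1$ again lies in $\af_{\geq 0}$, so $\overline{X_1}^r \cd b \in \B$ as well. I would then use the commutativity of the $l$- and $r$-actions on $\V$ to move $\overline{X_1}^r$ past $\beta(v)^l$, apply the induction hypothesis to $v$ with input $\overline{X_1}^r \cd b$, and finally invoke the anti-multiplicativity $\overline{u_{\geq 0}} = \overline v \cd \overline{X_1}$ to close the induction.

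For the generator case $X = \underline c$, both sides equal $(2h^\vee + k) \ep(b)$: the left because $\beta(\underline c)^l$ acts as the scalar $2h^\vee + k$ on $\V$, the right because $\overline{\underline c}^r = -\underline c^r$ acts as $-\bar k = 2h^\vee + k$; the shift $\gamma(\underline c) = 2h^\vee$ is tuned exactly to bridge the level jump between $k$ and $\bar k$. For $X = \tau_i(n)$ with $n \geq 0$, $\beta$ fixes $X$ and $\overline X = -X$, so the claim reduces to showing $\ep((\tau_i(n)^l + \tau_i(n)^r) \cd b) = 0$ for all $b \in \B$. Both operators raise the flipped grading by $n$ and $\ep$ is supported on $\B_0 = A$, so only $b \in \B_{-n}$ contributes. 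The case $n = 0$, $b \in A$, is the classical identity $(\tau_i^L + \tau_i^R)(b)|_e = 0$, which follows from the stated convention that $\tau_i^L$ and $\tau_i^R$ take values $\tau_i$ and $-\tau_i$ at the identity.

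For $n \geq 1$, I would use the explicit construction from [GMS2, Z1] of the two generating fields $\tau_i^l(z)$ and $\tau_i^r(z)$ in $\V = U\A_{\mf g, k}$, which satisfy a relation lifting $\tau_i^R = a^{ij} \tau_j^L$ on $G$ to the vertex algebroid, modulo corrections of lower conformal weight involving derivatives of the $a^{ij}$ and central terms. Combined with $\ep(a^{ij}) = -\delta_{ij}$, these corrections conspire to make $\ep((\tau_i(n)^l + \tau_i(n)^r) \cd b)$ vanish for every $b \in \B_{-n}$. The main obstacle I expect is precisely this last vertex-algebraic step: controlling the normal-ordering and derivative corrections in the explicit formulas for $\tau_i^l(z)$ and $\tau_i^r(z)$ carefully enough to confirm that they contribute nothing to $\ep$ for every $n \geq 1$ and every $b$ produced by the commutative vertex algebra structure of $\B$.
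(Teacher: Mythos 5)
Your overall skeleton---reduce to generators of $B$ by a length induction using the commutativity of the two actions, handle $\underline c$ via the shift $\gamma(\underline c)=2h^\vee$ which bridges the levels $k$ and $\bar k$, and reduce the remaining generator case to showing $\ep\bigl((\tau_i(n)^l+\tau_i(n)^r)\cd b\bigr)=0$ for $n\geq 0$---is exactly the structure of the paper's argument, and those parts are sound. But the decisive step is not actually proved: for $n\geq 1$ you only assert that the normal-ordering and derivative corrections in the explicit fields ``conspire'' to give the vanishing, and you yourself flag this as the main obstacle. That step is the entire content of the lemma; without it the proposal stops exactly where the real work begins.

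The missing ingredient is the exact mode-level lift of $\tau_i^R=a^{ij}\tau_j^L$, which the paper quotes as [Z1, Lemma 3.14 (10)]: for every $n\geq 0$ and $b\in\B$ one has $\tau_j(n)^l\cd b=\sum_i\sum_{p\geq 0}a^{ij}_{(-1-p)}\,\tau_i(n+p)^r\cd b$, an identity with no central or correction terms at all. Granting it, the computation is immediate: applying $\ep$, every term with $p\geq 1$ dies because $a^{ij}_{(-1-p)}$ shifts the conformal weight so the result lies outside the weight-zero part $A$ on which $\ep$ is supported, while the $p=0$ term contributes $\ep(a^{ij}_{(-1)}\tau_i(n)^r\cd b)=-\delta_{ij}\,\ep(\tau_i(n)^r\cd b)$, giving $\ep(\tau_j(n)^l\cd b)=-\ep(\tau_j(n)^r\cd b)$ uniformly in $n\geq 0$ (so even your separate classical $n=0$ argument is unnecessary). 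To complete your proof you must either cite this identity or derive it from the vertex algebroid construction; as written, the case $n\geq 1$ is a genuine gap rather than a routine verification.
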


\begin{proof}
By [Z1, Lemma 3.14 (10)], we have 
$\tau_j (n) ^l \cd b = \sum_i \sum_{p \geq 0} a ^{ij}_{(-1-p)} \tau_i (n + p) ^r \cdot b$ 
for any $n\geq 0$, $b\in \B$. 
Since $\ep |_{B_{\geq 1}} = 0$, we have 
$\ep ( \tau_j (n) ^l \cd b) = \sum_i \ep ( a ^{ij}_{(-1)} \tau_i ( n) ^r \cd b) 
= \sum_i ( - \delta_{ij}) \ep (\tau_i (n ) ^r \cd b) = - \ep (\tau_j (n) ^r \cd b)$.
Since the $U(\af_{\geq 0}, k)$- and $U(\af_{\geq 0}, \bar k)$-actions on $\B$ commute, 
for any $u_{\geq 0} = \tau_{j_1} (n_1) \cdots \tau_{j_q} (n_q)$, we have
$
\ep ( \beta (u_{\geq 0}) ^l \cd b)
= \ep ( u_{\geq 0} ^l \cd b)
= \ep ( - \tau_{j_1}(n_1) ^r \cd (\tau_{j_2}(n_2) \cdots ) ^l \cd b)
= \ep ( (\tau_{j_2}(n_2) \cdots ) ^l \cd (- \tau_{j_1}(n_1) ) ^r \cd b)
= \ep ( (\tau_{j_3}(n_3) \cdots ) ^l \cd (- \tau_{j_2}(n_2) ) ^r \cd (- \tau_{j_1}(n_1) ) ^r \cd b)
= \cdots 
= \ep ( (- \tau_{j_q}(n_q)) ^r \cd \cdots ( - \tau_{j_1}(n_1)) ^r \cd b)
= \ep ( \overline { u_{\geq 0}} ^r \cd b).
$
We also have 
$\ep ( \beta ( \underline{c} ) ^l \cd b) = \ep ( ( \underline{c} + 2h^{\vee}) ^l \cd b)
= \ep ( (k + 2h^{\vee}) b ) = \ep ( - \bar k b) = \ep ( \bar {\underline{c}} ^r \cd b)$, 
hence the lemma is proved. 
\end{proof}

\begin{prop} \label{PhiTh}
For any $ f \ot b \in \Nd \ot_\C \B $,   
we have 
$ \Phi ( f \ot b ) = \ep \, \Th ( f \ot b ) \,\, \bar{}$. 
\end{prop}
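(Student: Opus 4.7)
The plan is to verify the identity $\Phi(f\ot b) = \ep\,\Th(f\ot b)\,\bar{}\,$ in $U^*$ pointwise. Both sides depend linearly on $f\ot b$ and are functionals on $U$, so it is enough to evaluate at an arbitrary $u\in U$ written in the PBW form $u = u_{<0} u_{\geq 0}$ with $u_{<0}\in N$ and $u_{\geq 0}\in B$, and reduce the resulting scalar equality to Lemma~\ref{epbl}.

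First I would make $\Th(f\ot b)\in\h_B(U,\C_\gamma\ot\B)$ explicit. By Remark~\ref{gtens} (taking $E'=\B$), $\Th$ is built from the same chain of isomorphisms as in Proposition~\ref{tens}. Chasing through, $\Th(f\ot b)$ is the unique left $B$-linear map $U\to\C_\gamma\ot\B$ whose restriction to $N$ is $n\mapsto f(n)\,b$. Identifying $\C_\gamma\ot\B$ with $\B$ as a vector space, the left $B$-action on the target becomes the $U(\af_{\geq 0},k)$-action on $\B$ twisted by the automorphism $\beta$ of Lemma~\ref{epbl}. Writing $U = B\cd N$ in the opposite PBW ordering then gives the formula
$$\Th(f\ot b)(b'\cd n) \;=\; f(n)\,\beta(b')^l\cd b, \qquad b'\in B,\; n\in N.$$

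The key combinatorial observation is that the anti-involution swaps the two PBW orderings: $\bar u = \overline{u_{\geq 0}}\cd\overline{u_{<0}}$ is already in the form $b'\cd n$ with $b' = \overline{u_{\geq 0}}\in B$ and $n = \overline{u_{<0}}\in N$. The displayed formula therefore yields
$$\Th(f\ot b)(\bar u) \;=\; f(\overline{u_{<0}})\,\beta(\overline{u_{\geq 0}})^l\cd b.$$
Applying $\ep$ and then Lemma~\ref{epbl} with $u_{\geq 0}$ replaced by $\overline{u_{\geq 0}}$ (using involutivity of $\bar{}\,$) turns the right-hand side into $f(\overline{u_{<0}})\,\ep(u_{\geq 0}^r\cd b)$, which is exactly $\Phi(f\ot b)(u)$ by definition.

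The only substantive bookkeeping is tracking how the $\C_\gamma$-twist on the target of $\Th$ interacts with the two $U(\af_{\geq 0})$-actions on $\B$, at levels $k$ and $\bar k$. This interaction is exactly the content of Lemma~\ref{epbl}, so once that lemma is in hand the proof is a short chase of the PBW identifications together with the order-reversing property of the anti-involution.
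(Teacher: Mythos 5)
Your proposal is correct and follows essentially the same route as the paper: evaluate $\Th(f\ot b)$ at $\bar u = \overline{u_{\geq 0}}\,\overline{u_{<0}}$ using $B$-linearity and the $\C_\gamma$-twist (giving $f(\overline{u_{<0}})\,\beta(\overline{u_{\geq 0}})^l\cd b$), then apply $\ep$ and Lemma \ref{epbl}. The extra detail you supply on the explicit form of $\Th$ is just a fuller write-up of the same argument.
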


\begin{proof}
By the definition of $\Th$, 
for any $u = u_{<0} u_{\geq 0} \in U$, 
we have 
$\Th ( f \ot b ) ( \bar u ) 
= \Th ( f \ot b ) ( \overline { u_{\geq 0}} \, \overline { u_{<0}})
= f ( \overline {u_{<0}} ) \beta ( \overline { u_{\geq 0}} ) ^l \cd b$. 
Then it follows from Lemma \ref{epbl} that 
$\ep \, \Th ( f \ot b) ( \bar u) 
= f ( \overline { u_{<0}} ) \ep ( u_{\geq 0} ^r \cd b )
= \Phi ( f \ot b) ( u)$. 
\end{proof}

\begin{corollary}
For any $u\in U$ and $f \ot b \in \Nd \ot_\C \B$, 
we have $\Phi ( u^l \cd ( f \ot b ) ) = ( \Phi ( f \ot b ) ) \cd \bar u$. 
\end{corollary}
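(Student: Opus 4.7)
The plan is to reduce the claim to the identity in Proposition~\ref{PhiTh} and then track the left/right $U$-actions through the anti-involution. The key input is that $\Th\colon S_\gamma\otimes_U \V \to \h_B(U,\C_\gamma\otimes \B)$ is a left $U$-map and that, by construction of $\h_B(U,\C_\gamma\otimes \B)$ as a left $U$-module, the action is given by right multiplication in the first argument.

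First I would evaluate both sides of the desired identity at an arbitrary $u_1\in U$ and unfold using Proposition~\ref{PhiTh}: on the left,
$$\Phi(u^l\cd (f\ot b))(u_1)=\ep\bigl(\Th(u^l\cd (f\ot b))(\overline{u_1})\bigr),$$
and on the right, using the definition $(g\cd u)(u_1)=g(u\cd u_1)$ of the right $U$-action on $U^*$,
$$\bigl(\Phi(f\ot b)\cd \bar u\bigr)(u_1)=\Phi(f\ot b)(\bar u\cd u_1)=\ep\bigl(\Th(f\ot b)(\overline{\bar u\cd u_1})\bigr).$$

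Next I would use that $\Th$ is a left $U$-map, where the left $U$-action on $\h_B(U,\C_\gamma\otimes \B)$ is by right multiplication of $U$ on its first argument; this gives
$$\Th(u^l\cd (f\ot b))(\overline{u_1})=\Th(f\ot b)(\overline{u_1}\cd u).$$
Finally, the anti-involution property $\overline{\bar u\cd u_1}=\overline{u_1}\cd u$ matches the two expressions, and applying $\ep$ yields the desired equality.

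There is no real obstacle here — the content is essentially the combination of Proposition~\ref{PhiTh} with the compatibility of $\Th$ with the $U$-actions. The only subtlety that requires care is bookkeeping: the left $U$-module structure on $\h_B(U,\C_\gamma\otimes \B)$ is defined via right multiplication on $U$, so the anti-involution is exactly what converts this into the right $U$-action on $U^*$ defined in the excerpt. Once one has written the two sides symbolically and invoked $\overline{uv}=\bar v\cd\bar u$, the identification is immediate.
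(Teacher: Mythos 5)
Your proposal is correct and is essentially the paper's own argument: both rest on Proposition~\ref{PhiTh}, the fact that $\Th$ is a left $U$-map for the module structure given by right multiplication in the first argument, and the anti-involution identity $\overline{\bar u\,u_1}=\overline{u_1}\,u$. The paper merely phrases the same computation as an identity of composed maps ($\ep\,\Th(f\ot b)\,r_u\,\bar{\ }=\ep\,\Th(f\ot b)\,\bar{\ }\,l_{\bar u}$), whereas you evaluate pointwise at $u_1$; the content is identical.
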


\begin{proof}
Since $\Th$ is a (left) $U$-map, 
by Proposition \ref{PhiTh}, we have 
$$
\Phi ( u^l \cd (f \ot b) ) = \ep  \Th ( u^l \cd ( f \ot b ) ) \,\, \bar{}
= \ep ( u \cd \Th ( f \ot b ) ) \,\, \bar{} $$ $$
= \ep  \Th ( f \ot b ) r_u  \,\, \bar{} \,\,
= \ep  \Th ( f \ot b ) \,\, \bar{} \,\, l_{\bar u} 
= \Phi ( f \ot b ) \, l_{\bar u} 
= ( \Phi ( f \ot b ) ) \cd \bar u, 
$$
where $r_u, l_{\bar u}: U \to U$ 
denote the right and left multiplications by $u$ and $\bar u$ respectively. 
Hence we proved one half of Theorem \ref{main}. 
\end{proof}

Next we prove the other half of Theorem \ref{main}, 
which is to show that 
$$\Phi ( u^r \cd (f \ot b)) = u \cd ( \Phi (f \ot b)). $$

If $u = u_{\geq 0} \in B$, 
then $ {u_{\geq 0}} ^r \cd ( f \ot b ) = f \ot {u_{\geq 0}}^r \cd b$. 
Hence $ \Phi ( f \ot ( {u_{\geq 0}}^r \cd b ) ) ( u'_{<0} u'_{\geq 0} ) 
= f ( \overline{ u'_{<0} } ) \ep ( {u'_{\geq 0}}^r \cd {u_{\geq 0}}^r \cd b ) 
= \Phi ( f \ot b ) ( u'_{<0} u' _{\geq 0} u_{\geq 0} ) 
= u_{\geq 0} \cd ( \Phi ( f \ot b )) ( u'_{<0} u'_{\geq 0} )$, 
which means that 
$\Phi ( {u_{\geq 0}}^r \cd (f \ot b)) = u_{\geq 0} \cd ( \Phi (f \ot b))$. 

To prove it holds for $u = u_{<0} \in N$ as well, 
it suffices to show that 
$\Phi ( \tau_i (-1) ^r \cd (f \ot b)) = \tau_i (-1)  \cd ( \Phi (f \ot b))$
since $\af_{<0}$ is generated by $\af_{-1}$. 

Recall that although 
$\B$ is only closed under the action of $U(\af_{\geq 0}, \bar k)$, 
it can be equipped with a $\af_{\bar k}$-module structure 
$\tilde \rho: U \to \text{End} ( \B )$ such that 
$\tilde \rho ( u_{\geq 0} )  b = { u_{\geq 0} }^r \cd b$ 
for any $u_{\geq 0} \in B$ and $b\in \B$ 
(see [Z1, Lemma 3.29, Remark 3.30]). 
In addition, we have 
$$\tau_i (-1) ^r \cd ( f \ot b ) = \sum_j f \cd \tau_j (-1) \ot ( a^{ij} b ) + f \ot \tilde \rho( \tau_i (-1) ) b $$
(see [Z1, Lemma 3.14 (9)]). 
Hence for any $u_{<0} \in N$, $u_0\in U (\af_0)$ and $ u_{>0} \in U( \af_{>0} )$, 
we have 
$$
\Phi  ( \tau_i (-1) ^r \cd (f \ot b) ) ( u_{<0} u_0 u_{>0} )
$$
$$
= \sum_j f ( \tau_j (-1) \overline { u_{<0} } ) \ep ( { u_0 }^r \cd {u_{>0}}^r \cd  ( a^{ij} b )  ) 
+ f ( \overline { u_{<0} } ) \ep ( { u_0 }^r \cd {u_{>0}}^r \cd  \tilde \rho( \tau_i (-1)) b )
$$
$$
= \sum_j f ( \tau_j (-1) \overline { u_{<0} } ) \ep ( { u_0 }^r \cd  a^{ij}_{(-1)} {u_{>0}}^r \cd b)
+ f ( \overline { u_{<0} } ) \ep ( { u_0 }^r \cd [ u_{>0}, \tau_i (-1) ] ^r \cd b ). 
$$ 
The last equality is because 
$[ {u_{>0}}^r, a^{ij}_{(-1)} ] |_\B = 0$ (see [Z1, Lemma 3.14 (4)]), 
and 
$ [ u_{>0}, \tau_i (-1) ] \in B$,
$\ep |_{\B_{\geq 1}} = 0$. 

On the other hand, we have
$$
\tau_i (-1)  \cd ( \Phi (f \ot b)) ( u_{<0} u_0 u_{>0} ) 
= \Phi (f \ot b) ( u_{<0} u_0 u_{>0} \tau_i (-1) )
$$
$$
= \Phi (f \ot b) ( u_{<0} u_0 [ u_{>0}, \tau_i (-1)] + u_{<0} [ u_0, \tau_i (-1) ] u_{>0} + u_{<0}  \tau_i (-1) u_{\geq 0} )
$$
$$
= f ( \overline { u_{<0}} ) \ep ( {u_0}^r \cd [ u_{>0}, \tau_i(-1)]^r \cd b) 
+ \sum_s f ( \overline{ u_{<0} \tau_s (-1) } ) \ep ( F^{i, s} (u_0) ^r \cd {u_{>0}}^r \cd b )
$$
$$
+ f ( \overline{ u_{<0} \tau_i (-1) } ) \ep ( {u_{\geq 0}}^r \cd b )
$$
where $F^{i, s} : U (\af_0) \to U ( \af_0 )$ are maps such that 
$[u_0, \tau_i (-1) ] = \sum_s \tau_s (-1) F^{i, s} ( u_0 )$ for any $u_0 \in U(\af_0)$. 

Since $\tau_k^R ( a^{ij} ) = C_{kip} a^{pj}$, we have 
$[ \tau_k (0)^r, a^{ij}_{(-1)} ] = C_{kip} a^{pj}_{(-1)}$
(see [Z1, Lemma 3.14 (4)]). 
Compare it with the commutator 
$[ \tau_k (0), \tau_i(-1) ] = C_{kip} \tau_p (-1)$, 
it follows that 
$[ {u_0} ^r,  a^{ij}_{(-1)} ] = \sum_s a^{sj}_{(-1)} F^{i, s} (u_0)^r$. 
Hence we have 
$$
\sum_j f ( \tau_j (-1) \overline { u_{<0} } ) \ep ( { u_0 }^r \cd  a^{ij}_{(-1)} {u_{>0}}^r \cd b)
$$
$$
= \sum_j f ( \tau_j (-1) \overline { u_{<0} } ) \ep ( \sum_s a^{sj}_{(-1)} F^{i, s} (u_0)^r \cd {u_{>0}}^r \cd b)
+ \sum_j f ( \tau_j (-1) \overline { u_{<0} } ) \ep ( a^{ij}_{(-1)} {u_{\geq 0}}^r \cd b)
$$
$$
= \sum_j f ( \tau_j (-1) \overline { u_{<0} } ) \ep ( - F^{i, j} (u_0) ^r \cd {u_{>0}}^r \cd b) 
+ f ( \tau_i (-1) \overline { u_{<0} } ) \ep ( - {u_{\geq 0}}^r \cd b )
$$
$$
= \sum_j f ( \overline{ u_{<0} \tau_j (-1) } ) \ep ( F^{i, j} (u_0) ^r \cd {u_{>0}}^r \cd b )
+ f ( \overline{ u_{<0} \tau_i (-1) } ) \ep ( {u_{\geq 0}}^r \cd b ),
$$
which proves that 
$\Phi  ( \tau_i (-1) ^r \cd (f \ot b) ) = \tau_i (-1)  \cd ( \Phi (f \ot b))$. 
The proof of Theorem \ref{main} is now complete. 

\begin{remark} \label{explreal}
Following the notations in [Z1], 
let $\{ \widetilde \omega_i\}$ 
be right invariant $1$-forms dual to $\{ \tau_i^R \}$, 
and let $\widetilde { \B_0 } $ 
be the linear span of elements of the form
$\partial ^{(j_1)} \widetilde \omega_{i_1} \cdots \partial ^{(j_n)} \widetilde \omega_{i_n} $, 
then $\B = A \ot \widetilde { \B_0 } $. 
There is a non-degenerate pairing 
between $U(\af_{>0})$ and $ \widetilde { \B_0 } $, 
defined by $( u_{>0}, \tilde b ) = \ep ( {u_{>0}}^r \cd \tilde b )$, 
via which $ \widetilde { \B_0 } $ can be identified with  $U(\af_{>0})^ \circledast$, 
the graded dual of $U(\af_{>0})$. 
The regular functions $A$ can be identified with the Hopf dual $U(\mf g)^*_{\text {Hopf}}$, 
which is a subalgebra of $U(\mf g)^*$ defined by 
$$U(\mf g)^*_{\text {Hopf}} 
= \{ \phi \in U(\mf g)^* | \text{ Ker} \phi \text{ contains a two-sided ideal }  J \subset U(\mf{g}) 
$$ $$
\qquad \qquad \text{ of finite codimension} \}. $$
It is not hard to see that 
$\ep (u_0^r \cd u_{>0}^r \cd a \tilde b ) = \ep (u_0^r \cd a) \ep (u_{>0}^r \cd \tilde b)$
for any $u_0 \in U(\mf g), u_{>0} \in U(\af_{>0}), a\in A$ and $ \tilde b \in  \widetilde { \B_0 }$. 
Hence 
$$S \ot_U \V \cong \Nd \ot \B \cong \Nd \ot A \ot \widetilde { \B_0 } 
\cong U(\af_{<0})^\circledast \ot U(\mf{g})^*_{\text {Hopf}} \ot U(\af_{>0})^\circledast 
\subset U(\af, \bar k) ^*, 
$$
and $\Phi$ is injective. 
\end{remark}

\section{filtrations of the vertex operator algebra $\V$}

Fix $k \in \Q$, $k > - h^\vee$; set $\varkappa = k + h^\vee > 0$. 
Let $\mathcal O_{- \varkappa}$ be the full subcategory 
of the category of $\af_{ \bar k}$-modules
defined by Kazhdan and Lusztig in [KL1-4]. 
They constructed a tensor structure on $\mathcal O_{- \varkappa}$, 
and established an equivalence of tensor categories 
between $\mathcal O_{- \varkappa}$ and the category 
of finite-dimensional integrable representations 
of the quantum group with quantum parameter $q = e^{- i\pi/ \varkappa}$ 
(in the simply-laced case). 

Let $V_{\lambda,\bar k} = \text{Ind}_{\af_{\geq 0}}^{\af} V_\lambda$ be a Weyl module, 
denote the irreducible quotient of $V_{\lambda, \bar k}$ by $L_{\lambda, \bar k} $. 

\begin{define}{[KL1, Definition 2.15]}
$\mathcal O_{- \varkappa}$ is the full subcategory 
of $\af_{ \bar k}$-modules, which admits a finite composition series 
with factors of the form $L_{\lambda, \bar k}$ for various $\lambda \in P^+$. 
\end{define}

Let us recall some basic facts about $\mathcal O_{- \varkappa}$. 
The $\Z_{>0}$-grading on $\af_{>0}$ induces an $\N$-grading 
on the enveloping algebra: $U(\af_{>0}) = \bigoplus_{n \geq 0} U(\af_{>0})_n$. 
For any $V \in \mathcal O_{- \varkappa}$, $v \in V$, 
there exists an $n_1  \in \N$ such that $U(\af_{>0})_{n_1} \cd v = 0$. 

A module $\mathcal N$ over $\mf g \ot \C[t]$ is said to be a nil-module 
if $\text{dim}_\C \mathcal N < \infty$ and 
there exists a $n \geq 1$ such that $U(\af_{>0})_n  \mathcal N = 0$. 
Extend $\mathcal N$ to a $\af_{\geq 0}$-module by defining 
the action of $\underline c$ to be multiplication by $\bar k$, 
and let $\mathcal N_{\bar k} = \text{Ind}_{\af_{\geq 0}}^{\af} \mathcal N$
be the induced module. 
We say that $\mathcal N_{\bar k}$ is a generalized Weyl module. 

\begin{prop} {[KL1, Theorem 2.22]}
A $\af_{\bar k}$-module $V$ is in $\mathcal O_{ -\varkappa}$
if and only if $V$ is a quotient of a generalized Weyl module. 
\end{prop}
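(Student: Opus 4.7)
The plan is to prove the two implications separately, both relying on the closure of $\mathcal O_{-\varkappa}$ under quotients (immediate from the composition-factor definition) and on the nil-property recorded just above the statement: for each $v \in V \in \mathcal O_{-\varkappa}$, there exists $n \in \N$ with $U(\af_{>0})_{\geq n} \cd v = 0$.

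\emph{Every generalized Weyl module $\mathcal N_{\bar k}$ lies in $\mathcal O_{-\varkappa}$.} Since $\mathcal N$ is finite-dimensional with $\af_{>0}$ acting nilpotently, a standard argument (filter by the successive powers of the augmentation ideal of $U(\af_{>0})$, then refine using complete reducibility of finite-dimensional $\mf g$-modules) yields a finite $\af_{\geq 0}$-filtration of $\mathcal N$ whose successive quotients are the irreducibles $V_\lambda$, $\lambda \in P^+$, with $\af_{>0}$ acting trivially. Exactness of $\text{Ind}_{\af_{\geq 0}}^{\af}$ then produces a finite filtration of $\mathcal N_{\bar k}$ with factors the ordinary Weyl modules $V_{\lambda, \bar k}$. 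Each $V_{\lambda, \bar k}$ has a finite composition series with simple factors $L_{\mu, \bar k}$ ($\mu \in P^+$), a standard result of Kazhdan--Lusztig, so $\mathcal N_{\bar k} \in \mathcal O_{-\varkappa}$; closure under quotients passes this to any quotient $V$.

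\emph{Every $V \in \mathcal O_{-\varkappa}$ is a quotient of a generalized Weyl module.} The strategy is to construct a finite-dimensional $\af_{\geq 0}$-submodule $\mathcal N \subset V$ that generates $V$ as a $\af$-module; by adjunction the inclusion will extend to a $\af$-surjection $\mathcal N_{\bar k} \twoheadrightarrow V$. The finite composition length of $V$ means it is finitely generated as a $\af$-module; pick generators $v_1, \ldots, v_m$ (for instance by lifting the highest weight vectors of the simple subquotients $L_{\lambda_i, \bar k}$), and set $\mathcal N = \sum_i U(\af_{\geq 0}) \cd v_i$. Using the PBW decomposition and the fact that $\underline c$ acts by the scalar $\bar k$, one has $\mathcal N = \sum_i U(\af_{>0}) \cd U(\mf g) \cd v_i$. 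The $\mf g$-action on $V$ is locally finite (inherited from the graded description $V_{\mu, \bar k} \cong U(\af_{<0}) \ot V_\mu$ of the composition factors), so each $U(\mf g) \cd v_i$ is finite-dimensional.

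The main obstacle is showing that $\mathcal N$ itself is finite-dimensional, i.e.\ that $U(\af_{>0})$ acts locally finitely on each $U(\mf g) \cd v_i$. The key observation is that the nil-bound on $v_i$ is preserved under $\mf g$: if $U(\af_{>0})_{\geq n_i} \cd v_i = 0$ and $X \in \mf g$, then for any $Y \in U(\af_{>0})_{\geq n_i}$ we have $YXv_i = XYv_i + [Y,X]v_i = 0$, since $[\mf g, \af_{>0}] \subset \af_{>0}$ preserves the $\Z$-grading and hence $[Y,X] \in U(\af_{>0})_{\geq n_i}$. Iterating yields $U(\af_{>0})_{\geq N} \cd \mathcal N = 0$ for $N = \max_i n_i$, and since each graded piece $U(\af_{>0})_j$ is finite-dimensional, $\mathcal N$ is a finite-dimensional nil-module, completing the construction.
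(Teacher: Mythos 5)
The paper itself offers no proof of this proposition: it is quoted directly from [KL1, Theorem 2.22], so there is no in-paper argument to compare yours against. Judged on its own, your sketch follows the standard Kazhdan--Lusztig route and is essentially sound: filter the nil-module by powers of the augmentation ideal of $U(\af_{>0})$, refine by complete reducibility of finite-dimensional $\mf g$-modules, use exactness of induction to filter $\mathcal N_{\bar k}$ by Weyl modules for one direction; for the other, extract a finite-dimensional generating $\af_{\geq 0}$-submodule and use adjunction. Your commutator argument showing that the bound $U(\af_{>0})_{\geq n_i}\cd v_i = 0$ propagates to $U(\mf g)\cd v_i$ (no central terms arise since the degrees involved are nonzero) is correct, and it does yield that $\mathcal N$ is a finite-dimensional nil-module, hence a surjection $\mathcal N_{\bar k} \twoheadrightarrow V$.

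Two caveats. First, in the ``if'' direction all the substantive content is concentrated in the sentence asserting that each $V_{\lambda, \bar k}$ has a finite composition series with factors $L_{\mu, \bar k}$: this finite-length property of Weyl modules at the negative level $\bar k$ (i.e. $\bar k + h^\vee = -\varkappa < 0$) is precisely where the hypothesis on the level enters, and it is itself the nontrivial special case of the implication you are proving (a Weyl module is a generalized Weyl module). So your argument is a clean reduction to that special case plus a citation, not a self-contained proof; within a paper that cites the whole proposition this is acceptable, but you should flag it as the real input. Second, in the ``only if'' direction two steps deserve to be routed explicitly through the grading facts quoted just below the definition of $\mathcal O_{-\varkappa}$ ([KL1, Lemma 2.20, Proposition 2.21]): local $\mf g$-finiteness of $V$ does not follow formally from local finiteness of the composition factors (local finiteness is not obviously stable under extensions), but it is immediate from $V = \bigoplus_z V_z$ with $\dim V_z < \infty$ and $\tau_i(0) V_z \subset V_z$; likewise the strengthened nil bound $U(\af_{>0})_{\geq n}\cd v = 0$ that you invoke (the paper records only a single degree $n_1$) follows because $x(n) V_z \subset V_{z+n}$ and the set of $z$ with $V_z \neq 0$ is contained in finitely many sets $z_i - \N$, hence is bounded above. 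With those references supplied, your proposal is a correct sketch of the intended argument.
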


Given $V \in \mathcal O_{- \varkappa}$, 
let $\bar {L_0}: V \to V$ be the Sugawara operator defined by 
$\bar {L_0} v = - \frac{1}{\varkappa} \sum_{j > 0} \sum_i \tau_i(-j) \tau_i(j) v 
- \frac{1}{2 \varkappa}  \sum_i \tau_i(0) \tau_i(0) v $, 
where $\{ \tau_i \}$ is an orthonormal basis of $\mf g$ 
with respect to the normalized Killing form. 
Note that this operator is well defined and locally finite. 
Let $V_z$ be the generalized eigenspace of $\bar {L_0}$ with eigenvalue $-z \in \C$, 
we have $V = \bigoplus_{z \in \C} V_z$ 
with $\text{dim} V_z < \infty$. 
In fact there exist $z_1, \cdots, z_m \in \Q$ such that 
$\{ z | V_z \neq 0\} \subset \{ z_1 - \N \} \cup \cdots \cup \{ z_m - \N \}$, 
and $V$ becomes a $\Q$-graded $\af_{\bar k}$-representation, 
i.e. $x(n) V_z \subset V_{z+n}$ for any $x(n) \in \af$
(see [KL1, Lemma 2.20, Proposition 2.21]). 
In case $V = V_{\lambda, \bar k}$ is a Weyl module, 
$\bar {L_0}$ acts on $V_{\lambda, \bar k}$ semisimply. 
More specifically, we have 
$\bar {L_0} |_{U(\af_{<0})_{-n} \ot V_\lambda} 
= - \frac{ \langle \lambda, \lambda +2\rho \rangle }{2 \varkappa} + n$, 
where $\rho$ is the half sum of positive roots. 

Define the dual representation of $V$ as follows: 
as a vector space $V^* = \bigoplus_z (V_z)^*$; 
the $\af$-action is given by 
$ X f (v) = f ( -X v)$ for any $X \in \af, f\in V^*, v\in V$. 
In particular $V^*$ is a $\af_{- \bar k}$-module 
and locally $U(\af_{<0})$-finite. 
In order for $V^*$ to be a graded $\af$-module as well, 
set $(V^*)_z = (V_{-z} )^*$, 
or equivalently set $(V^*)_z$ to be the generalized $(-z)$-eigenspace of the operator 
$L_0' =  \frac{1}{\varkappa} \sum_{j > 0} \sum_i \tau_i(j) \tau_i(-j)  
+ \frac{1}{2 \varkappa}  \sum_i \tau_i(0) \tau_i(0)$
which acts on $V^*$. 

The contragredient dual $V^c$ is isomorphic to $V^*$ as a vector space, 
but instead of using $- \text{Id}: \af \to \af$, 
we use the anti-involution $x(n) \mapsto - x(-n), \underline c \mapsto \underline c$
to define the $\af$-action on $V^c$. 
Unlike $V^*$, 
the contragredient module $V^c$ is a $\af_{\bar k}$-representation, 
locally $U(\af_{>0})$-finite, 
and in fact belongs to $\mathcal O_{ -\varkappa}$. 

Given $V \in \mathcal O_{ -\varkappa}$, 
define a map $\phi_V: V^* \ot V \to U(\af, \bar k)^*; 
\phi_V ( f \ot v ) (u) = f ( u \cd v )$
for any $f \in V^*, v\in V, u \in U(\af)$. 
It is easy to see that $\phi_V$ is a $\af_{- \bar k} \oplus \af_{\bar k}$-map, 
where the $\af_{- \bar k} \oplus \af_{\bar k}$-module structure of $U(\af, \bar k)^*$
is given by $ (X, 0) \cd g = - g \cd X$
and $(0, X) \cd g = X \cd g$
for any $X \in \af$, $g \in U(\af, \bar k)^*$. 
Denote the image of $\phi_V$ by $\M (V)$, 
which is called the matrix coefficients of $V$. 

Recall the $\af_{ - \bar k} \oplus \af_{\bar k}$-map 
$\Phi: S_\gamma \ot_U \V \to U(\af, \bar k)^*$ defined in Section 3. 
As pointed out in Remark \ref{explreal}, 
the map $\Phi$ is injective and its image, 
which we denote by $\M^{\mathcal O_{- \varkappa}}$, 
is isomorphic to 
$U(\af_{<0})^\circledast \ot U(\mf g)^*_{\text{Hopf}} \ot U(\af_{>0})^\circledast$. 
Here $U(\af_{<0})^\circledast = \bigoplus_{n \leq 0} (U(\af_{<0})_n)^*$, 
$U(\af_{>0})^\circledast = \bigoplus_{n \geq 0} (U (\af_{<0})_n)^*$
are graded duals. 

\begin{prop}
$\M^{\mathcal O_{- \varkappa}}$ consists of matrix coefficients of modules from the category 
$\mathcal O_{- \varkappa}$, 
i.e. $\M^{\mathcal O_{- \varkappa}} = \sum_{V \in \mathcal O_{- \varkappa}} \M (V)$. 
\end{prop}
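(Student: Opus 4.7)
The plan is to prove the equality by two inclusions. For the inclusion $\supseteq$, fix $V \in \mathcal O_{-\varkappa}$, $f \in V^*$, $v \in V$, and use the PBW decomposition $u = u_{<0}\,u_0\,u_{>0}$ to rewrite $\phi_V(f \otimes v)(u) = f(u_{<0}\,u_0\,u_{>0}\,v)$ as a finite sum of products of three factors, one in each of $U(\af_{<0})^\circledast$, $U(\mf g)^*_{\text{Hopf}}$, $U(\af_{>0})^\circledast$. Since $V = \oplus_z V_z$ has finite-dimensional $\bar{L_0}$-eigenspaces bounded above in grading, $U(\af_{>0})\cdot v$ is finite-dimensional; choosing a homogeneous basis $\{v_i\}$ gives $\phi_i \in U(\af_{>0})^\circledast$ with $u_{>0}\,v = \sum \phi_i(u_{>0})\,v_i$. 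Then $W := U(\mf g)\cdot\text{span}\{v_i\}$ is a finite-dimensional $\mf g$-submodule whose annihilator $J \subset U(\mf g)$ is a two-sided ideal of finite codimension, so the coefficients $\psi_{ij}$ in $u_0\,v_i = \sum \psi_{ij}(u_0)\,w_{ij}$ (for $\{w_{ij}\}$ a basis of $W$) have kernel containing $J$ and belong to $U(\mf g)^*_{\text{Hopf}}$. Since $f$ lies in the graded dual of $V$ and each $w_{ij}$ is homogeneous, $u_{<0} \mapsto f(u_{<0}\,w_{ij})$ lies in $U(\af_{<0})^\circledast$.

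For the reverse inclusion, given $g \in \M^{\mathcal O_{-\varkappa}}$, take $V := U(\af, \bar k)\cdot g \subset U(\af, \bar k)^*$, $v := g$, and $f(h) := h(1)$; the bimodule convention gives $\phi_V(f \otimes g)(u) = (u\cdot g)(1) = g(u)$, so it suffices to verify $V \in \mathcal O_{-\varkappa}$ and $f \in V^*$. By [KL1, Theorem 2.22] the first condition reduces to producing a surjection onto $V$ from a generalized Weyl module, and for this it is enough to show that $\mathcal N := U(\af_{\geq 0})\cdot g$ is finite-dimensional and nil, since then $V = U(\af)\,\mathcal N$ is a quotient of $\mathcal N_{\bar k}$. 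Writing $g = \sum_i \chi_i \otimes \psi_i \otimes \phi_i$, the right $U(\af_{\geq 0})$-action preserves the $U(\af_{<0})^\circledast$-factor, so it suffices to bound the right $U(\af_{\geq 0})$-orbit of each $\zeta_i := \psi_i \otimes \phi_i \in U(\af_{\geq 0})^*$. Picking a two-sided ideal $J_i \subset U(\mf g)$ of finite codimension contained in $\ker \psi_i$ and $N_i$ with $\phi_i$ vanishing on $U(\af_{>0})_{\geq N_i}$, the subspace $I_i := J_i\cdot U(\af_{\geq 0}) + U(\af_{>0})_{\geq N_i}\cdot U(\af_{\geq 0})$ is a right ideal of finite codimension contained in $\ker \zeta_i$, where one uses the PBW commutation $U(\af_{>0})_n\cdot U(\mf g) \subset U(\mf g)\cdot U(\af_{>0})_{\geq n}$. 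Hence the right orbit of $\zeta_i$ lies in the finite-dimensional annihilator of $I_i$. The nil condition $U(\af_{>0})_N\cdot \mathcal N = 0$ for $N := \max_i N_i$ follows from the same commutation: right multiplication by elements of $U(\af_{>0})_{\geq N}$ shifts the $U(\af_{>0})$-component of any PBW-decomposed argument into degree $\geq N_i$, where every $\phi_i$ vanishes.

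Finally, to verify $f \in V^*$, decompose $g$ into its finitely many homogeneous components for the grading on $U(\af, \bar k)^*$ inherited from the standard $\Z$-grading on $U(\af, \bar k)$ and handle each separately; for a homogeneous $g$, the $\bar{L_0}$-grading on $V$ differs from the inherited $U^*$-grading by a constant, and $f(h) = h(1)$ is supported on the single graded piece of $V$ paired with the degree-zero part of $U(\af, \bar k)$. The main obstacle is the finite-dimensionality of $\mathcal N$---specifically verifying that $I_i$ is a right ideal contained in $\ker \zeta_i$---which requires careful PBW bookkeeping to track how commuting $U(\af_{>0})$ past $U(\mf g)$ preserves the $\af_{>0}$-degree; the grading-matching needed to put $f$ in $V^*$ is a secondary and routine check.
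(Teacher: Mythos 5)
Your proposal follows the paper's own route in both directions: for $\supseteq$ you expand $\phi_V(f\ot v)$ along the PBW decomposition $u=u_{<0}u_0u_{>0}$ (the paper leaves this bookkeeping to the reader), and for $\subseteq$ you take $V=U(\af,\bar k)\cd g$, $f=\delta$ with $\delta(h)=h(1)$, show that the $\af_{\geq 0}$-submodule generated by $g$ is a finite-dimensional nil-module, and invoke [KL1, Theorem 2.22]; your $I_i$-argument correctly supplies the finiteness that the paper attributes to local finiteness of the $\mf g$-action and the vanishing of $U(\af_{>0})_n\cd g$.

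The one step whose stated justification fails is the final check that $f\in V^*$. It is not true in general that for homogeneous $g$ the $\bar{L_0}$-grading on $V=U\cd g$ differs from the inherited $\Z$-grading by a constant: $\bar{L_0}$ does preserve each inherited graded piece of $V$, but on a fixed piece it can have several generalized eigenvalues with non-integral differences. For instance, take $g=g_1+g_2$ homogeneous of the same inherited degree, with $g_1,g_2$ matrix coefficients of simple modules $L_{\lambda,\bar k}$, $L_{\mu,\bar k}$ whose conformal weights $\langle\lambda,\lambda+2\rho\rangle/2\varkappa$, $\langle\mu,\mu+2\rho\rangle/2\varkappa$ are incongruent modulo $\Z$; then $g$ lies in no single generalized $\bar{L_0}$-eigenspace, so no constant shift can match the two gradings. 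The conclusion you need --- that $\delta$ vanishes on all but finitely many $V_z$ --- is nevertheless true, and follows from what you have already established rather than from a comparison of gradings: $\delta$ kills every inherited-homogeneous functional of nonzero degree, and since $\bar{L_0}$ preserves the inherited degree it suffices to see that the degree-zero subspace of $V$ meets only finitely many $V_z$. This holds because $V=U(\af_{<0})\cd\mathcal N$ with $\mathcal N=U(\af_{\geq 0})\cd g$ finite-dimensional and spanned by homogeneous elements: $\mathcal N$ sits inside finitely many $V_z$, while $U(\af_{<0})_{-n}\cd\mathcal N$ has inherited degrees shifted up by $n$ and $\bar{L_0}$-support shifted down by $n$, so only boundedly many $n$ can contribute to inherited degree zero, forcing $V_z\cap V_{\deg 0}=0$ for $z\ll 0$. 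With this repair (which is at the level of detail the paper itself omits when asserting $\delta\in W^*$), your proof is complete.
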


\begin{proof}
Let $V = \bigoplus_z V_z  \in \mathcal O_{- \varkappa}$, 
$v \in V$ and $f \in V^*$, 
for any $u = u_{<0} u_0 u_{>0} \in U = U(\af)$, 
we have $\phi_V ( f \ot v ) ( u) = \langle f, u_{<0} u_0 u_{>0} \cd v \rangle
=\langle \overline {u_{<0}} \cd f, u_0 \cd u_{>0} \cd v \rangle$. 
Since $V \in \mathcal O_{- \varkappa}$, 
there exist $n_1, n_2 \in \N$ such that 
$U(\af_{<0})_{ - n_1} \cd f = U( \af_{>0} )_{n_2} \cd v = 0 $. 
Moreover each $V_z$ is finite-dimensional and semisimple as a $\mf g$-module, 
therefore it is not hard to see that 
$\phi_V (f \ot v) \in U(\af_{<0})^\circledast \ot U(\mf g)^*_{\text{Hopf}} \ot U(\af_{>0})^\circledast$, 
i.e. $\M(V) \subset \M^{\mathcal O_{- \varkappa}}$. 

On the other hand, 
let $g \in  \M^{\mathcal O_{- \varkappa}}$, 
there exists an $n \in \N$ such that $U(\af_{>0})_{n} \cd g = 0$. 
Since each $U(\af_{>0})_{n'}$ is finite-dimensional
and $\mf g$ acts on $\M^{\mathcal O_{- \varkappa}}$ locally finitely, 
the $\af_{\geq 0}$-submodule generated by $g$ is a nil-module. 
Hence the $\af$-submodule $W = U(\af) \cd g$ 
generated by $g$ is a quotient of a generalized Weyl module, 
hence it belongs to $\mathcal O_{ - \varkappa}$. 
Let $\delta$ be the functional on $U^*$
defined by $\delta (g' ) = g' (1)$, 
then $\delta \in W^*$ 
and $g = \phi_W ( \delta \ot g ) \subset \M (W)$. 
\end{proof}

Define two operators $\bar {L_0}, L_0'$ 
that act on  $\M^{\mathcal O_{- \varkappa}}$ as follows: 
for any $g \in \M^{\mathcal O_{- \varkappa}}$, 
set $\bar {L_0} g =  - \frac{1}{\varkappa} \sum_{j > 0} \sum_i \tau_i(-j) \cd \tau_i(j) \cd g 
- \frac{1}{2 \varkappa}  \sum_i \tau_i(0) \cd \tau_i(0) \cd g$
and $L_0' g =  \frac{1}{\varkappa} \sum_{j > 0} \sum_i g \cd \tau_i (- j ) \cd \tau_i(  j )  
+ \frac{1}{2 \varkappa}  \sum_i g \cd \tau_i(0) \cd \tau_i(0)  $. 
Let $\M^{\mathcal O_{- \varkappa}}_{z', z}$ be the subspace 
consisting of all $g \in \M^{\mathcal O_{- \varkappa}}$ 
such that $g$ is in the kernel of some power of $\bar {L_0} + z \text{Id}$ 
and the kernel of some power of $L_0' + z' \text{Id}$. 
Then  $\M^{\mathcal O_{- \varkappa}} = \bigoplus_{z, z'}  \M^{\mathcal O_{- \varkappa}}_{z', z}$, 
and $\phi_V ( (V^*)_{z'} \ot V_z ) \subset \M^{\mathcal O_{- \varkappa}}_{z', z}$
for any $V \in \mathcal O_{-\varkappa}$. 
Moreover $ \M^{\mathcal O_{- \varkappa}}_{z', z} \cd x(n) 
\subset  \M^{\mathcal O_{- \varkappa}}_{z' +n, z}$ and 
$ x(n) \cd \M^{\mathcal O_{- \varkappa}}_{z', z} \subset
 \M^{\mathcal O_{- \varkappa}}_{z', z+ n}$
 for any $x (n) \in \af$. 
Define a $\Z$-grading on 
$\M^{\mathcal O_{- \varkappa}}$: 
for any $g_1 \in (U(\af_{<0})_n)^*$, 
$a \in U(\mf g)^*_{\text{Hopf}}$, 
$g_2 \in (U(\af_{>0})_{n'})^*$, 
define $\text{deg } g_1 \ot a \ot g_2 = - n - n'$; 
set $\M^{\mathcal O_{- \varkappa}}_{\,\, n} = \{ g | \text{ deg } g = n \}$. 
It is not difficult to see that
$\M^{\mathcal O_{- \varkappa}}_{\,\, n} 
= \bigoplus_{z + z' = n} \M^{\mathcal O_{- \varkappa}}_{z', z}$. 

Following [KL1, 3.3], 
define  a partial order on $P^+$ as follows: 
$\lambda \leq \mu$ if either $\lambda = \mu$ 
or $\langle \lambda, \lambda + 2 \rho \rangle < \langle \mu, \mu + 2 \rho \rangle$. 
Let $\mathcal O_{- \varkappa}^s$ be the full subcategory of $\mathcal O_{- \varkappa}$
whose objects are the $V$ in $\mathcal O_{- \varkappa}$
such that the composition factors of $V$ are of the form $L_{\lambda, \bar k}$
for some $\lambda$ in the finite set 
$F^s = \{ \lambda \in P^+ | \langle \lambda, \lambda + 2 \rho \rangle \leq s \}$. 

We say that a module $V \in \mathcal O_{- \varkappa}$ is  tilting 
if both $V$ and $V^c$ have a Weyl filtration. 
For any $\lambda \in P^+$, 
there exists an indecomposable tilting module $T_{\lambda, \bar k}$
such that $V_{\lambda, \bar k} \hookrightarrow T_{\lambda, \bar k}$, 
and any other Weyl modules $V_{\mu, \bar k}$ 
entering the Weyl filtration of $T_{\lambda, \bar k}$
satisfy $\mu < \lambda$
(see [KL4, Proposition 27.2]).

\begin{lemma} \label{(contra)weylfil}
Let $V, V' \in \mathcal O_{ -\varkappa}$. 
\begin{enumerate}
\item 
If $V$ has a (finite) Weyl filtration with factors 
isomorphic to $V_{\lambda_i, \bar k}$ for various $\lambda_i \in P^+$, 
then $\M (V) \subset \sum_i \M (T_{\lambda_i, \bar k})$. 
\item 
If $V'$ has a (finite) filtration with factors isomorphic to 
$V_{\mu_i, \bar k}^c$ for various $\mu_i \in P^+$, 
then $\M(V') \subset \sum_i \M (T_{\mu_i, \bar k}^c)$. 
\end{enumerate}
\end{lemma}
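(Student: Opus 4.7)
The plan is to reduce both parts to two elementary functoriality properties of $\M$, together with an embedding (resp. surjection) built from a direct sum of tilting modules. First I would record: (i) an inclusion $W \hookrightarrow X$ in $\mathcal O_{-\varkappa}$ gives $\M(W) \subset \M(X)$ — extend $f \in W^*$ to $\tilde f \in X^*$ using surjectivity of graded duals, and observe $\phi_X(\tilde f \otimes w) = \phi_W(f \otimes w)$ since $uw \in W$; (ii) dually, a surjection $X \twoheadrightarrow W$ yields $\M(W) \subset \M(X)$ by pulling back functionals and lifting vectors; (iii) $\M(X_1 \oplus X_2) = \M(X_1) + \M(X_2)$, by componentwise decomposition of the pairing.

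For part (1), I would prove by induction on the length $n$ of the Weyl filtration $0 = V_0 \subset V_1 \subset \cdots \subset V_n = V$ the stronger statement that $V$ embeds in $\bigoplus_i T_{\lambda_i, \bar k}$. The case $n=1$ is the defining embedding $V_{\lambda_1, \bar k} \hookrightarrow T_{\lambda_1, \bar k}$. For the inductive step, the hypothesis produces $\iota \colon V_{n-1} \hookrightarrow T' := \bigoplus_{i<n} T_{\lambda_i, \bar k}$, and I would form the pushout of
$$0 \to V_{n-1} \to V \to V_{\lambda_n, \bar k} \to 0$$
along $\iota$, obtaining a short exact sequence $0 \to T' \to V'' \to V_{\lambda_n, \bar k} \to 0$ together with an injection $V \hookrightarrow V''$. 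Since $T'$ inherits a contragredient Weyl filtration from its tilting summands, the standard highest-weight-category vanishing $\text{Ext}^1(V_{\lambda_n, \bar k}, T') = 0$ applies, forcing a splitting $V'' \cong T' \oplus V_{\lambda_n, \bar k}$. Composing with $V_{\lambda_n, \bar k} \hookrightarrow T_{\lambda_n, \bar k}$ yields $V \hookrightarrow V'' \hookrightarrow \bigoplus_i T_{\lambda_i, \bar k}$, and properties (i), (iii) conclude.

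Part (2) follows by contragredient duality. Since the functor $V \mapsto V^c$ is exact, contravariant, and involutive on $\mathcal O_{-\varkappa}$, the module $(V')^c$ carries a Weyl filtration (the reverse of the given one) with factors $V_{\mu_i, \bar k}$. Part (1) then provides $(V')^c \hookrightarrow \bigoplus_i T_{\mu_i, \bar k}$; applying $c$ produces a surjection $\bigoplus_i T_{\mu_i, \bar k}^c \twoheadrightarrow V'$, whence (ii), (iii) conclude.

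The main obstacle is justifying the $\text{Ext}^1$-vanishing used to split the pushout: one needs Weyl modules to be Ext-orthogonal to modules admitting a contragredient Weyl filtration in the precise category $\mathcal O_{-\varkappa}$ (so in particular this category is closed under the relevant extensions, and Yoneda Ext matches the ambient one). This is classical for highest-weight categories; in the affine Kazhdan--Lusztig setting it should be cited from [KL1--4] or [S], where analogous arguments underpin the very construction of the tilting modules $T_{\lambda, \bar k}$.
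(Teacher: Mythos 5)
Your proposal is correct and follows essentially the same route as the paper: the paper's proof (by reference to [Z2, Lemma 3.2]) likewise constructs an injection $V \hookrightarrow \bigoplus_i T_{\lambda_i, \bar k}$ and a surjection $\bigoplus_i T_{\mu_i, \bar k}^c \twoheadrightarrow V'$ from the vanishing $\mathrm{Ext}^1_{\mathcal O_{-\varkappa}}(V_{\lambda, \bar k}, V_{\mu, \bar k}^c) = 0$, which is exactly [KL4, Proposition 27.1], and then uses the functoriality of matrix coefficients. You have merely written out the pushout/induction and the elementary properties of $\M$ that the paper leaves implicit.
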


\begin{proof}
The proof is exactly the same as that of [Z2, Lemma 3.2]: 
we can construct an injection $V \hookrightarrow \bigoplus_{i} T_{\lambda_i, \bar k}$, 
and a surjection $\bigoplus_i T_{\mu_i, \bar k}^c \twoheadrightarrow V'$, 
since $\text{Ext}_{\mathcal O_{- \varkappa}}^1 ( V_{\lambda, \bar k}, V_{\mu, \bar k}^c ) = 0$
(see [KL4, Proposition 27.1]). 
\end{proof}

\begin{corollary}
$\M^{\mathcal O_{- \varkappa}}$ consists of the matrix coefficients of tilting modules from  
$\mathcal O_{- \varkappa}$, 
i.e. $\M^{\mathcal O_{- \varkappa}} 
= \sum_{V \in \mathcal O_{- \varkappa}, V \text{tilting}} \M (V)$. 
\end{corollary}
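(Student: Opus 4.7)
The plan is to prove the corollary by establishing both inclusions. The direction $\supset$ is immediate: any tilting module lies in $\mathcal O_{-\varkappa}$ by definition, so its matrix coefficients are contained in $\M^{\mathcal O_{-\varkappa}}$ by the previous proposition. All the work is in the reverse inclusion, and the idea is to combine the previous proposition with Lemma \ref{(contra)weylfil}(1) via a passage to generalized Weyl modules, which are what naturally admit Weyl filtrations.

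Given $g \in \M^{\mathcal O_{-\varkappa}}$, I would reuse the construction from the proof of the previous proposition: the cyclic submodule $W = U(\af) \cdot g$ belongs to $\mathcal O_{-\varkappa}$, realized as a quotient $\pi : \mathcal N_{\bar k} \twoheadrightarrow W$ of a generalized Weyl module, and $g = \phi_W(\delta \ot g) \in \M(W)$. The surjection $\pi$ dualizes to an $\af_{-\bar k}$-equivariant inclusion $W^* \hookrightarrow \mathcal N_{\bar k}^*$, and a short check of the defining formula $\phi_W(f \ot v)(u) = \phi_{\mathcal N_{\bar k}}((f \circ \pi) \ot v')(u)$ for any preimage $v'$ of $v$ yields the matrix coefficient containment $\M(W) \subset \M(\mathcal N_{\bar k})$.

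Next I would verify that $\mathcal N_{\bar k}$ admits a Weyl filtration, so that Lemma \ref{(contra)weylfil}(1) applies. Since $\mathcal N$ is a finite-dimensional nil-module, the descending chain $\mathcal N \supset U(\af_{>0})_{\geq 1}\,\mathcal N \supset U(\af_{>0})_{\geq 2}\,\mathcal N \supset \cdots$ terminates at $0$, is stable under $\af_{\geq 0}$, and has successive quotients on which $\af_{>0}$ acts trivially. Refining each quotient by a composition series of finite-dimensional semisimple $\mf g$-modules produces a filtration of $\mathcal N$ by $\af_{\geq 0}$-submodules whose factors are simple finite-dimensional $\mf g$-modules with trivial $\af_{>0}$-action. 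Applying the exact induction functor $\mathrm{Ind}_{\af_{\geq 0}}^{\af}$ yields a Weyl filtration of $\mathcal N_{\bar k}$, as required.

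Putting it together, Lemma \ref{(contra)weylfil}(1) produces indecomposable tilting modules $T_{\lambda_i, \bar k}$ with $\M(\mathcal N_{\bar k}) \subset \sum_i \M(T_{\lambda_i, \bar k})$, and chaining $g \in \M(W) \subset \M(\mathcal N_{\bar k}) \subset \sum_i \M(T_{\lambda_i, \bar k})$ completes the proof. The main technical hurdle is the Weyl-filtration step for $\mathcal N_{\bar k}$: one must upgrade the nilpotency of the $\af_{>0}$-action on $\mathcal N$ into a genuine $\af_{\geq 0}$-module filtration (rather than just a $\mf g$-module filtration) before inducing, since otherwise the exactness of induction does not deliver Weyl modules as the factors.
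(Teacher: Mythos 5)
Your proof is correct, and it reaches the conclusion by a genuinely different route from the paper. The paper argues directly for an arbitrary $V \in \mathcal O_{-\varkappa}$: choose $s$ with $V \in \mathcal O_{-\varkappa}^s$, invoke [KL1, Proposition 3.9] to get a projective object $P$ of $\mathcal O_{-\varkappa}^s$ with a finite Weyl filtration surjecting onto $V$, and conclude $\M(V) \subset \M(P) \subset \sum_i \M(T_{\lambda_i, \bar k})$ by Lemma \ref{(contra)weylfil}(1). You instead recycle the cyclic-submodule construction from the preceding proposition (so that $g \in \M(W)$ with $W$ a quotient of a generalized Weyl module $\mathcal N_{\bar k}$) and then prove by hand that $\mathcal N_{\bar k}$ has a Weyl filtration, by filtering the nil-module $\mathcal N$ through $U(\af_{>0})_{\geq m}\,\mathcal N$, refining by a $\mf g$-composition series (any $\mf g$-submodule of a subquotient with trivial $\af_{>0}$-action is automatically $\af_{\geq 0}$-stable, which is exactly the point you flag), and using exactness of $\mathrm{Ind}_{\af_{\geq 0}}^{\af}$. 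Both arguments funnel through the same key input, Lemma \ref{(contra)weylfil}(1) (i.e.\ the [KL4] tilting results); the difference is where the Weyl-filtered cover comes from. The paper's version is shorter and keeps the weights $\lambda_i$ inside the finite set $F^s$, while yours is more self-contained -- it avoids projectives in the truncated subcategories altogether, needing only Weyl's complete reducibility and flatness of $U$ over $U(\af_{\geq 0})$ -- and it makes explicit the quotient step $\M(W) \subset \M(\mathcal N_{\bar k})$ (dualizing the surjection into the restricted duals), which the paper uses silently in the form $\M(V) \subset \M(P)$.
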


\begin{proof}
For any $V \in \mathcal O_{- \varkappa}$, 
choose $s$ such that $V \in \mathcal O_{- \varkappa}^s$. 
By [KL1, Proposition 3.9], 
there exists a $P$, projective in $\mathcal O_{- \varkappa}^s$
and having a (finite) Weyl filtration, 
such that $V$ a quotient of $P$. 
Hence by Lemma \ref{(contra)weylfil} (1), 
we have $\M (V) \subset \M (P) \subset \sum_i \M (T_{\lambda_i, \bar k})$
for some $\lambda_i \in F^s$. 
\end{proof}

\begin{prop}
Order the dominant weights in such a way 
$P^+ = \{ \nu_1,  \cdots, \nu_i, \cdots \}$
that $\nu_i < \nu_j$ implies $i < j$. 
Set $\M^{\mathcal O_{- \varkappa}, i} = \sum_{j \leq i} \M (T_{\nu_j, \bar k} )$, 
then $\M^{\mathcal O_{- \varkappa}, 1} \subset \cdots 
\subset \M^{\mathcal O_{- \varkappa}, i-1} 
\subset \M^{\mathcal O_{- \varkappa}, i}
\subset \cdots$
is an increasing filtration of $\af_{- \bar k} \oplus \af_{\bar k}$-submodules 
of $\M^{\mathcal O_{- \varkappa}}$ 
with factors $\M^{\mathcal O_{- \varkappa}, i} / \M^{\mathcal O_{- \varkappa}, i-1}$  
isomorphic to $V_{\nu_i, \bar k}^* \ot V_{- \omega_0 \nu_i, \bar k}^c$, 
where $\omega_0$ is the longest element in the Weyl group. 
\end{prop}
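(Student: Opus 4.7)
The plan is to adapt the argument of [Z2, Proposition 3.3] from the quantum group to the affine setting, via the matrix-coefficient framework developed in this section. First, each $\M(T_{\nu_j, \bar k})$ is the image of the $\af_{-\bar k} \oplus \af_{\bar k}$-map $\phi_{T_{\nu_j, \bar k}}$, so the chain $\M^{\mathcal O_{-\varkappa}, 1} \subset \M^{\mathcal O_{-\varkappa}, 2} \subset \cdots$ consists automatically of $\af_{-\bar k} \oplus \af_{\bar k}$-submodules; by the preceding Corollary the filtration is exhaustive.

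To describe the $i$-th subquotient, set $T := T_{\nu_i, \bar k}$. Standard tilting theory (cf. [KL4, Proposition 27.2]) furnishes two canonical short exact sequences
$$0 \to V_{\nu_i, \bar k} \to T \to T' \to 0, \qquad 0 \to T'' \to T \to V_{-\omega_0 \nu_i, \bar k}^c \to 0,$$
where $T'$ has a Weyl filtration with factors $V_{\mu, \bar k}$, $\mu < \nu_i$, and $T''$ has a contragredient Weyl filtration with factors $V_{\mu, \bar k}^c$ whose $\af$-highest weights $-\omega_0 \mu$ are $< \nu_i$. The $-\omega_0$ twist at the contragredient top reflects that the anti-involution $x(n) \mapsto -x(-n)$ restricts to $-\mathrm{Id}$ on $\mf g$, so $V_{\mu, \bar k}^c$ has $\af$-highest weight $-\omega_0 \mu$ rather than $\mu$. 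Applying Lemma \ref{(contra)weylfil} together with the preceding Corollary places $\M(T'), \M(T'') \subset \M^{\mathcal O_{-\varkappa}, i-1}$. Identifying $\M(T') = \phi_T(V_{\nu_i, \bar k}^\perp \ot T)$ and $\M(T'') = \phi_T(T^* \ot T'')$, where $V_{\nu_i, \bar k}^\perp \subset T^*$ is the annihilator of $V_{\nu_i, \bar k}$, the quotient $\M(T)/(\M(T') + \M(T''))$ is covered by the image under $\phi_T$ of
$$(T^*/V_{\nu_i, \bar k}^\perp) \ot (T/T'') \;\cong\; V_{\nu_i, \bar k}^* \ot V_{-\omega_0 \nu_i, \bar k}^c,$$
producing a surjection $\pi_i : V_{\nu_i, \bar k}^* \ot V_{-\omega_0 \nu_i, \bar k}^c \twoheadrightarrow \M^{\mathcal O_{-\varkappa}, i}/\M^{\mathcal O_{-\varkappa}, i-1}$.

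To upgrade $\pi_i$ to an isomorphism, I would use a bigraded character count. From Remark \ref{explreal} together with the Peter--Weyl decomposition $U(\mf g)^*_{\mathrm{Hopf}} \cong \bigoplus_{\lambda \in P^+} V_\lambda^* \ot V_\lambda$, a direct calculation yields
$$\mathrm{ch}\,\M^{\mathcal O_{-\varkappa}} \;=\; \sum_{\lambda \in P^+} \mathrm{ch}(V_{\lambda, \bar k}^*) \cdot \mathrm{ch}(V_{-\omega_0 \lambda, \bar k}^c),$$
matching the total upper bound $\sum_i \mathrm{ch}(V_{\nu_i, \bar k}^* \ot V_{-\omega_0 \nu_i, \bar k}^c)$ produced by the family $\{\pi_i\}$. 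Refining with the bigrading from the generalized eigenspaces of $(\bar{L_0}, L_0')$ on $\M^{\mathcal O_{-\varkappa}}$ separates the contributions of distinct highest weights and forces each $\pi_i$ to be an isomorphism.

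The main obstacle is this last step: localizing the global character identity at each filtration stage, i.e.\ verifying that the matrix coefficients of $T_{\nu_i, \bar k}$ occupy a bigraded locus (keyed to the Sugawara eigenvalue $-\langle \nu_i, \nu_i + 2\rho\rangle/(2\varkappa)$) cleanly separated from the loci of smaller $\nu_j$, so as to rule out accidental overlaps that could shrink a given subquotient. This is precisely where the partial order on $P^+$ defined via $\langle \lambda, \lambda + 2\rho \rangle$, already used in setting up $\mathcal O_{-\varkappa}^s$, plays the decisive role.
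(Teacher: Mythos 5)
Your setup and the construction of the surjection are fine, and they follow the route the paper (via its appeal to the proof of [Z2, Theorem 3.3] together with Lemma \ref{(contra)weylfil}) intends: the submodule and exhaustion claims are immediate, the two short exact sequences attached to $T:=T_{\nu_i,\bar k}$ exist by [KL4, Proposition 27.2] and contragredient duality (using the standard identification $T_{\mu,\bar k}^c\cong T_{-\omega_0\mu,\bar k}$, which you should state, since Lemma \ref{(contra)weylfil}(2) only gives containment in $\sum\M(T_{\mu,\bar k}^c)$), and the identifications $\M(T')=\phi_T(V_{\nu_i,\bar k}^\perp\ot T)$, $\M(T'')=\phi_T(T^*\ot T'')$ do produce an $\af_{-\bar k}\oplus\af_{\bar k}$-surjection $\pi_i\colon V_{\nu_i,\bar k}^*\ot V_{-\omega_0\nu_i,\bar k}^c\twoheadrightarrow \M^{\mathcal O_{-\varkappa},i}/\M^{\mathcal O_{-\varkappa},i-1}$.

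The genuine gap is the injectivity of $\pi_i$ (equivalently, $\M(T)\cap\M^{\mathcal O_{-\varkappa},i-1}=\M(T')+\M(T'')$), and the character count you propose does not close it as stated. The bigraded identity $\mathrm{ch}\,\M^{\mathcal O_{-\varkappa}}=\sum_\lambda \mathrm{ch}(V_{\lambda,\bar k}^*)\,\mathrm{ch}(V_{-\omega_0\lambda,\bar k}^c)$ is not available in the form you need: since $\varkappa\in\Q$, infinitely many $\lambda$ (all those with $\langle\lambda,\lambda+2\rho\rangle$ in a fixed congruence class modulo $2\varkappa\Z$) contribute to one and the same generalized $(\bar L_0,L_0')$-bieigenspace, so the bigraded components of the full $\M^{\mathcal O_{-\varkappa}}$ are infinite dimensional (the proposition being proved says exactly this), and passing to blocks as in Remark \ref{decomposition} does not restore finiteness because a block contains infinitely many linked weights; moreover the vector-space identification of Remark \ref{explreal} is not compatible with the $(\bar L_0,L_0')$-bigrading in any evident way, so even the formal identity requires proof. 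If instead you truncate to $\M^{\mathcal O_{-\varkappa},i}$, where bigraded pieces are finite dimensional, a comparison of characters needs an a priori \emph{upper} bound on $\mathrm{ch}\,\M^{\mathcal O_{-\varkappa},i}$, i.e.\ an independence statement for the matrix coefficients of the various $T_{\nu_j,\bar k}$ --- which is precisely what is at stake. So the step you flag as "the main obstacle" is not a technical localization of a known identity; it is the heart of the proof, and it remains unproved in your proposal. The paper closes it by transporting the argument of [Z2, Theorem 3.3] (the analogous filtration of the quantum function algebra, proved there with Lemma \ref{(contra)weylfil}-type inputs inside the truncated categories $\mathcal O_{-\varkappa}^s$), not by a global character computation, so to complete your write-up you would need to supply that ingredient or an equivalent one.
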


\begin{proof}
The proof is the same as that of  [Z2, Theorem 3.3], using Lemma \ref{(contra)weylfil}. 
\end{proof}

\begin{remark} \label{decomposition}
The category $\mathcal O_{- \varkappa}$ is a direct sum of subcategories 
corresponding to the orbits of the shifted action of affine Weyl group on the weight lattice
(see [KL4, Lemma 27.7]). 
Hence we can decompose  $\M^{\mathcal O_{- \varkappa}}$, 
as a $\af_{- \bar k} \oplus \af_{\bar k}$-module, 
into summands corresponding to the orbits as well. 
Some summands are semisimple
(see [KL4, Proposition 27.4], [Z2, Proposition 3.1]), 
but all have an increasing filtration of the above type. 
\end{remark}

\begin{prop}
The vertex operator algebra $\V$ is isomorphic to 
$\h_U (S_\gamma, \M^{\mathcal O_{- \varkappa}} )$ 
as a $\af_k \oplus \af_{\bar k}$-module. 
\end{prop}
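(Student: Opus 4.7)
The plan is to apply $\h_U(S_\gamma, -)$ to the $\af_{-\bar k} \oplus \af_{\bar k}$-isomorphism $\Phi: S_\gamma \ot_U \V \xrightarrow{\sim} \M^{\mathcal O_{-\varkappa}}$ established in Remark \ref{explreal}, thereby reducing the claim to the identification $\h_U(S_\gamma, S_\gamma \ot_U \V) \cong \V$ as $\af_k \oplus \af_{\bar k}$-modules. Here Lemma \ref{levelshift} plays a central role: the $\af$-action on $\h_U(S_\gamma, \M^{\mathcal O_{-\varkappa}})$ coming from the right $U$-action on $S_\gamma$ picks up a shift of $-2h^{\vee}$ relative to the left $\af_{-\bar k}$-action on $\M^{\mathcal O_{-\varkappa}}$, and since $-\bar k - 2h^{\vee} = k$, it lands precisely at level $k$; the $\af_{\bar k}$-action descends from the second factor on $\M^{\mathcal O_{-\varkappa}}$.

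To build a comparison map, Proposition \ref{PhiTh} factors $\Phi$ as $\ep \circ \Theta \circ \bar{\,}$, where $\Theta: S_\gamma \ot_U \V \to \h_B(U, \C_\gamma \ot \B)$ is the left $U$-embedding of Remark \ref{gtens} applied to $E' = \B$. Applying $\h_U(S_\gamma, -)$ to $\Theta$ and composing with the isomorphism $\h_U(S_\gamma, \h_B(U, \C_\gamma \ot \B)) \cong U \ot_B \B = \V$ furnished by Proposition \ref{hom} (applicable because $\B = \bigoplus_{n \leq 0} \B_n$ is bounded from above) produces a natural $U$-map $\Psi: \h_U(S_\gamma, S_\gamma \ot_U \V) \to \V$.

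It remains to verify that $\Psi$ is a bijection respecting both commuting $\af$-actions. The cleanest approach is to pair $\Psi$ with the adjunction-style unit $\V \to \h_U(S_\gamma, S_\gamma \ot_U \V)$, $v \mapsto (s \mapsto s \ot v)$, and check the two are mutually inverse; alternatively, using $\h_U(S_\gamma, -) \cong \h_N(\Nd, -)$ from Section 2 and the vector-space identification $S_\gamma \ot_U \V \cong \Nd \ot_\C \B$, the task reduces to showing $\h_N(\Nd, \Nd \ot_\C \B) \cong N \ot_\C \B \cong \V$. Writing $\B = \bigoplus_n \B_n$ with finite-dimensional graded pieces and exploiting the non-positive gradings on both $N$ and $\B$, each graded component of this Hom receives contributions from only finitely many $\B_n$, so $\h_N(\Nd, -)$ distributes over the direct sum and Soergel's equivalence applied termwise yields the claim. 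The main obstacle is precisely this last step: since $\V$ lies outside the category $\mathcal M$ of Section 2, one must directly check that Hom commutes with the relevant infinite direct sum, a convergence statement available only because of the non-positive gradings on $N$ and $\B$. Once this is secured, matching the $\af_k \oplus \af_{\bar k}$-structures via Lemma \ref{levelshift} is routine bookkeeping.
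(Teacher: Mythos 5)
Your proposal is correct and follows essentially the same route as the paper: the paper's proof is exactly your ``alternative'' verification, namely $\h_U(S_\gamma,\M^{\mathcal O_{-\varkappa}})\cong\h_N(\Nd,\,\Nd\ot_\C\B)\cong\h_\C(\Nd,\B)\cong N\ot_\C\B\cong\V$, with the degreewise finiteness coming from $\Nd$ being non-negatively graded with finite-dimensional pieces while $\B$ is non-positively graded, and with the resulting identification checked to be an $\af\oplus\af$-map (your level bookkeeping via Lemma \ref{levelshift} and the comparison through $\Theta$ and Proposition \ref{hom} is the same mechanism the paper leaves implicit). One small correction: your claim that the graded pieces $\B_n$ are finite-dimensional is false ($\B_0=A=\C[G]$ is infinite-dimensional), but this is harmless because the finiteness actually used is that of the graded pieces of $N$ (equivalently $\Nd$) together with the boundedness of the two gradings, which is what makes only finitely many $\B_n$ contribute in each degree.
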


\begin{proof}
Recall that $\M^{\mathcal O_{- \varkappa}} \cong S_\gamma \ot_U \V = \Nd \ot \B$. 
Hence $\h_U (S_\gamma, \M^{\mathcal O_{- \varkappa}} ) 
\cong \h_N (\Nd,  \Nd \ot \B) 
\cong \h_\C ( \Nd, \B )
\cong N \ot \B
\cong \V$, 
the second to last isomorphism is because $\B$ is non-positively graded
while $\Nd$ is non-negatively graded. 
Moreover the induced isomorphism 
$\V \to \h_U (S_\gamma, \M^{\mathcal O_{- \varkappa}} ) 
\cong \h_U (S_\gamma, S_\gamma \ot_U \V)$
is a $\af \oplus \af$-map. 
\end{proof}

\begin{lemma} \label{small}
For any $b \in \B$, 
there exists an $i$ such that $\Nd \ot b \subset \M^{\mathcal O_{- \varkappa}, i}$. 
\end{lemma}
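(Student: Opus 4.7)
The plan is to exhibit a single generalized Weyl module $V \in \mathcal O_{-\varkappa}$ whose matrix coefficients $\M(V)$ contain $\Phi(\Nd \ot b)$; since $V$ will lie in some $\mathcal O_{-\varkappa}^s$ with $F^s$ finite, the preceding corollary then confines $\M(V)$, and hence $\Phi(\Nd \ot b)$, inside $\sum_{\lambda \in F^s}\M(T_{\lambda,\bar k}) \subset \M^{\mathcal O_{-\varkappa}, i}$ for any $i$ large enough that $F^s \subset \{\nu_1,\dots,\nu_i\}$.

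To construct $V$, first form $M_b := U(\af_{\geq 0}, \bar k)^r \cd b \subset \B$, using only the right $\af_{\bar k}$-action, which preserves $\B$. Writing $b \in \B_{-d}$ in the opposite-conformal grading, $\mf g$ acts via $\tau_i(0)^r$, preserves the grading, and is locally finite on $\B \cong A \ot \widetilde{\B_0}$ (since $A \cong U(\mf g)^*_{\mathrm{Hopf}}$ is locally $\mf g$-finite and each graded piece of $\widetilde{\B_0}$ is a finite-dimensional $\mf g$-module); hence $U(\mf g) \cd b$ is finite-dimensional. For $n>0$, $\tau_j(n)^r$ raises the opposite-grading by $n$, so $U(\af_{>0})_n$ annihilates any element of $\B_{-e}$ once $n>e$; in particular $U(\af_{>0})_{d+1}$ annihilates $M_b$, and $M_b$ is the image of the finite-dimensional space $U(\af_{>0})_{\leq d} \cd U(\mf g) \cd b$, hence is itself finite-dimensional. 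So $M_b$ is a nil-module, and $V := \mathrm{Ind}_{\af_{\geq 0}}^{\af} M_b$ (with $\underline c$ acting as $\bar k$) is a generalized Weyl module, thus in $\mathcal O_{-\varkappa}$ by [KL1, Theorem 2.22].

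To place $\Phi(\Nd \ot b)$ inside $\M(V)$, I would identify $V \cong N \ot_\C M_b$ by PBW, set $\tilde b := 1 \ot b$, and for each $f \in \Nd$ define $F_f \in V^*$ by $F_f(n \ot m) := f(\bar n)\,\ep(m)$. Because $f$ is graded-finite in $\Nd$ and $\ep$ vanishes off $\B_0$, $F_f$ has finite support in the $\Q$-grading of $V$ and so lies in the graded dual. A direct PBW computation with $u = u_{<0} u_{\geq 0}$ yields $u \cd \tilde b = u_{<0} \ot (u_{\geq 0}^r \cd b)$, and therefore $\phi_V(F_f \ot \tilde b)(u) = f(\overline{u_{<0}})\,\ep(u_{\geq 0}^r \cd b) = \Phi(f \ot b)(u)$. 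This gives $\Phi(\Nd \ot b) \subset \M(V)$, and combining with the first paragraph finishes the proof.

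The main obstacle is the nil-module claim in the construction of $V$: one must carefully invoke the structural description of the right $\af_{\bar k}$-action on $\B$ from [Z1] — specifically the grading behavior of $\tau_j(n)^r$ for $n>0$ and the $\mf g$-local finiteness built into $\B = A \ot \widetilde{\B_0}$ — to control both the dimension and the $\af_{>0}$-nilpotency of $M_b$ simultaneously. Once this is established, the realization of matrix coefficients and the bound by the preceding corollary are straightforward formal consequences of the explicit formula for $\Phi$.
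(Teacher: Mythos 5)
Your proposal is correct and takes essentially the same route as the paper's proof: both form the nil-module $U(\af_{\geq 0},\bar k)^r\cd b\subset\B$, induce the generalized Weyl module, realize $\Phi(f\ot b)$ as a matrix coefficient of it, and then land in some $\M^{\mathcal O_{-\varkappa},i}$ via the finitely many tilting modules controlling its Weyl factors. The only difference is presentational: you write down the functional $F_f$ explicitly, while the paper instead observes that the $\af_{\bar k}$-submodule generated by $f\ot b$ is a quotient of that generalized Weyl module.
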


\begin{proof}
For any $ f \in \Nd$ and $u_{\geq 0} \in U(\af_{\geq 0})$, 
we have $u_{\geq 0} \cd (f \ot b) = f \ot ( u_{\geq 0}^r \cd b )$. 
Let $\mathcal N$ be the $U(\af_{\geq 0}, \bar k)$-submodule of $\B$ 
generated by $b$, 
then $\mathcal N$ is a nil-module 
and the $\af_{\bar k}$-submodule $U(\af, \bar k) \cd (f \ot b)$ 
generated by $f \ot b$ is a quotient of the generalized Weyl module $\mathcal N_{\bar k}$. 
Hence $f \ot b \in \M (\mathcal N_{\bar k})$ for any $f \in \Nd$, 
hence there exists an $i$ 
such that $\Nd \ot b \subset \M^{\mathcal O_{- \varkappa}, i}$. 
\end{proof}

\begin{theorem} \label{main2}
Set $\Sigma^i = \h_U (S_\gamma, \M^{\mathcal O_{- \varkappa}, i} )$, 
then $\V = \bigcup_i \Sigma^i$ 
and $\Sigma^1 \subset \cdots \subset \Sigma^{i-1} \subset \Sigma^i \subset \cdots $
is an increasing filtration of $\af_k \oplus \af_{\bar k}$-submodules of $\V$
with factors $\Sigma^i / \Sigma^{i-1}$ isomorphic to
$V_{ -\omega_0 \nu_i, k} \ot V_{ - \omega_0 \nu_i, \bar k}^c$. 
\end{theorem}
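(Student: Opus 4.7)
My plan is to transport the filtration $\{\M^{\mathcal O_{- \varkappa}, i}\}$ of the preceding proposition on the filtration of $\M^{\mathcal O_{- \varkappa}}$ across the isomorphism $\V \cong \h_U(S_\gamma, \M^{\mathcal O_{- \varkappa}})$ established just before. Setting $\Sigma^i := \h_U(S_\gamma, \M^{\mathcal O_{- \varkappa}, i})$, the inclusions $\M^{\mathcal O_{- \varkappa}, i-1} \hookrightarrow \M^{\mathcal O_{- \varkappa}, i}$ immediately yield the ascending chain of $\af_k \oplus \af_{\bar k}$-submodules $\Sigma^1 \subset \Sigma^2 \subset \cdots$ inside $\V$ by left exactness of $\h_U(S_\gamma, -) \cong \h_N(\Nd, -)$.

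For the subquotients I would apply $\h_U(S_\gamma, -)$ to the short exact sequence
\[
0 \to \M^{\mathcal O_{- \varkappa}, i-1} \to \M^{\mathcal O_{- \varkappa}, i} \to V_{\nu_i, \bar k}^* \ot V_{- \omega_0 \nu_i, \bar k}^c \to 0.
\]
Since $\af_{\bar k}$ acts only on the second tensor factor and $\h_U(S_\gamma, -)$ leaves that action untouched, Corollary \ref{tiny} (giving $\h_U(S_\gamma, V_{\nu_i, \bar k}^*) \cong V_{\nu_i^*, k} = V_{- \omega_0 \nu_i, k}$) identifies the third term, after applying the functor, as $V_{- \omega_0 \nu_i, k} \ot V_{- \omega_0 \nu_i, \bar k}^c$, provided the functor is right-exact on this sequence.

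To verify exhaustion $\V = \bigcup_i \Sigma^i$, I would unpack the explicit chain of isomorphisms $\V \cong \h_N(\Nd, \Nd \ot \B) \cong \h_\C(\Nd, \B) \cong N \ot \B$ from the proof of the preceding proposition. Under this chain, an $N$-linear $\phi : \Nd \to \Nd \ot \B$ is determined by its ``restriction'' $\bar{\phi} \in \h_\C(\Nd, \B)$, and a direct check shows $\mathrm{Im}(\phi) \subset \Nd \ot \mathrm{Im}(\bar{\phi})$. An element $v \in \V$ corresponds to a finite sum $\sum_j n_j \ot b_j \in N \ot \B$ with $\bar{\phi}(f) = \sum_j f(n_j) b_j$, so $\mathrm{Im}(\bar{\phi}) \subset \mathrm{span}(b_j)$ is finite-dimensional. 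Lemma \ref{small} then supplies an $i_j$ for each $b_j$ with $\Nd \ot b_j \subset \M^{\mathcal O_{- \varkappa}, i_j}$; taking $i := \max_j i_j$ gives $\mathrm{Im}(\phi) \subset \M^{\mathcal O_{- \varkappa}, i}$, i.e.\ $v \in \Sigma^i$.

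The main obstacle is the exactness of $\h_U(S_\gamma, -)$ on the short exact sequence above, since neither $\M^{\mathcal O_{- \varkappa}, i}$ nor the quotient factor lies in the finite-direct-sum category $\mathcal K$ to which Soergel's equivalence applies directly: restricted to $N$ via the left $\af_{- \bar k}$-action, the spectator factor $V^c_{- \omega_0 \nu_i, \bar k}$ contributes an infinite-dimensional multiplicity space. I would handle this by noting that each term is still a graded direct sum of shifted copies of $\Nd$ as an $N$-module, and by extending Soergel's exactness componentwise along this direct sum decomposition.
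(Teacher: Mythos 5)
Your strategy is the same as the paper's (transport the filtration of $\M^{\mathcal O_{-\varkappa}}$ through $\h_U(S_\gamma,-)\cong\h_N(\Nd,-)$, identify the subquotients via Corollary \ref{tiny}, and prove exhaustion from Lemma \ref{small}), and your exhaustion argument is correct and essentially the one given in the paper: the image of the $N$-map attached to $v$ lies in $\Nd\ot W$ for a finite-dimensional subspace $W\subset\B$, and Lemma \ref{small} applied to a basis of $W$ supplies the index $i$.

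The genuine gap is in the step you yourself flag as the main obstacle, and your proposed fix does not close it. First, the assertion that the middle term $\M^{\mathcal O_{-\varkappa},i}$ is, over $N$, a direct sum of grading-shifted copies of $\Nd$ is not something you may simply ``note'': it is exactly what has to be proved, and the paper proves it by induction on $i$. Second, even granting it, ``extending Soergel's exactness componentwise along this direct sum decomposition'' is not a valid deduction: the short exact sequence does not decompose along any chosen $N$-module decompositions of its three terms, so there is nothing to apply componentwise. What is actually needed is a grading-preserving $N$-linear section of the surjection onto $V_{\nu_i,\bar k}^*\ot V_{-\omega_0\nu_i,\bar k}^c$; then $\h_N(\Nd,-)$ carries the sequence to an exact sequence. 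The paper produces such a section by exploiting the spectator $\af_{\bar k}$-grading: fixing the eigenvalue $z$ of the right-hand Sugawara operator, the slices $\bigoplus_{z'}\M^{\mathcal O_{-\varkappa},i}_{z',z}$ have finite-dimensional homogeneous components, the quotient slice $V_{\nu_i,\bar k}^*\ot(V_{-\omega_0\nu_i,\bar k}^c)_z$ is a \emph{finite} direct sum of shifted copies of $\Nd$, and by induction on $i$ so is the middle slice; hence each slice sequence splits over $N$, and the slice-wise splittings assemble into the required section. Your sketch omits both the finiteness device (fixing $z$) and the induction on $i$, which are precisely what make the argument work. A smaller related omission: identifying $\h_U(S_\gamma, V_{\nu_i,\bar k}^*\ot V_{-\omega_0\nu_i,\bar k}^c)$ with $V_{-\omega_0\nu_i,k}\ot V_{-\omega_0\nu_i,\bar k}^c$ is not automatic, since graded Hom need not commute with tensoring by the infinite-dimensional spectator $V_{-\omega_0\nu_i,\bar k}^c$; one needs that its grading is bounded from above, which is how Proposition \ref{hom} together with Corollary \ref{tiny} is used in the paper.
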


\begin{proof}
For any $u_{<0} \ot b \in N \ot \B \cong \V$, 
let $\mathcal N' \subset \B $ be the $U(\af_{\geq 0}, k)$-submodule generated by $b$, 
then $\mathcal N' $ is finite-dimensional. 
For any $s \in S_\gamma$
we have $p ( s \ot (u_{<0} \ot b ) ) \in \Nd \ot \mathcal N'$,
where $p: S_\gamma \ot \V \to S_\gamma \ot_U \V$ is the canonical projection. 
By Lemma \ref{small}, 
there exists an $i$ 
such that $p ( s \ot (u_{<0} \ot b ) ) \in \M^{\mathcal O_{- \varkappa}, i}$
for any $s \in S_\gamma$, 
hence $u_{<0} \ot b \in \h_U (S_\gamma, \M^{\mathcal O_{- \varkappa}, i}) = \Sigma^i$. 
This proves that $\V = \bigcup_i \Sigma^i$. 

Note that $\M^{\mathcal O_{- \varkappa}, i}
= \bigoplus_{z', z} \M^{\mathcal O_{- \varkappa}, i}_{z', z}$
with $\text{dim } \M^{\mathcal O_{- \varkappa}, i}_{z', z} < \infty$. 
Fix $z$, 
the exact sequence of $\af_{-\bar k} \oplus \af_{\bar k}$-modules
$0 \to \M^{\mathcal O_{- \varkappa}, i-1} 
\to \M^{\mathcal O_{- \varkappa}, i} 
\to V_{\nu_i, \bar k}^* \ot V_{ - \omega_0 \nu_i, \bar k}^c \to 0$ 
restricts to an exact sequence of $\af_{- \bar k}$-modules
$0 \to \bigoplus_{z'} \M^{\mathcal O_{- \varkappa}, i-1}_{z', z} 
\to \bigoplus_{z'} \M^{\mathcal O_{- \varkappa}, i}_{z', z}
\to V_{\nu_i, \bar k}^* \ot (V_{ - \omega_0 \nu_i, \bar k}^c)_z \to 0$. 
Since $V_{\nu_i, \bar k}^* \ot (V_{ - \omega_0 \nu_i, \bar k}^c)_z$
is isomorphic to a finite direct sum of grading-shifted copies of $\Nd$ over $N$, 
by induction on $i$, 
so does $\bigoplus_{z'} \M^{\mathcal O_{- \varkappa}, i}_{z', z}$ for each $i$, 
and the two exact sequences split over $N$, 
which means that there exists a grading preserving $N$-map 
$V_{\nu_i, \bar k}^* \ot V_{ - \omega_0 \nu_i, \bar k}^c $
$\to \M^{\mathcal O_{- \varkappa}, i} $
so that its composition with the projection is identity on the former. 
Therefore the sequence of $\af_k \oplus \af_{\bar k}$-modules
$0 \to \h_U (S_\gamma, \M^{\mathcal O_{- \varkappa}, i-1} )
\to \h_U (S_\gamma, \M^{\mathcal O_{- \varkappa}, i} )
\to \h_U (S_\gamma, V_{\nu_i, \bar k}^* \ot V_{ - \omega_0 \nu_i, \bar k}^c) \to 0$
is exact since $\h_U (S_\gamma, - ) \cong \h_N (\Nd, -)$. 
Hence we have $\Sigma^i / \Sigma^{i-1} \cong 
\h_U (S_\gamma, V_{\nu_i, \bar k}^* \ot V_{ - \omega_0 \nu_i, \bar k}^c)$, 
which is isomorphic to 
$V_{ - \omega_0 \nu_i, k} \ot V_{ - \omega_0 \nu_i, \bar k}^c$
by Proposition \ref{hom}, Lemma \ref{tiny} and the fact that 
the grading on $V_{ - \omega_0 \nu_i, \bar k}^c$ is bounded from above. 
\end{proof}

\begin{remark}
The decomposition of $\M^{\mathcal O_{- \varkappa}}$ 
discussed in Remark \ref{decomposition}
leads to a decomposition of $\V$, as a $\af_k \oplus \af_{\bar k}$-module, 
into summands 
corresponding to the orbits of the affine Weyl group on the weight lattice. 
Again some summands are semisimple, 
but each has an increasing filtration of the above type. 
\end{remark}

\begin{corollary}
The vertex operator algebra $\V$ 
admits a decreasing filtration of $\af_k \oplus \af_{\bar k}$-submodules
$\V \supset \Xi_1 \supset \cdots \supset \Xi_{i-1} \supset \Xi_i \supset \cdots $
with factors $\Xi_{i-1} / \Xi_i $ isomorphic to 
$V_{- \omega_0 \nu_i, k}^c \ot V_{ - \omega_0 \nu_i, \bar k}$, 
and $\bigcap_i \Xi_i = 0$.
\end{corollary}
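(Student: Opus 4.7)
\emph{Proof proposal.} My plan is to obtain the decreasing filtration $\{\Xi_i\}$ as orthogonal complements of the increasing filtration $\{\Sigma^i\}$ from Theorem \ref{main2}, with respect to the non-degenerate bilinear form on $\V$ constructed in [Z1].

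First I would recall that [Z1] equips $\V$ with a non-degenerate graded bilinear form $\langle\cdot,\cdot\rangle: \V \times \V \to \C$ that is contragredient for both commuting actions of $\af$, i.e.\ $\langle x(n)^l v, w\rangle = -\langle v, x(-n)^l w\rangle$ and $\langle x(n)^r v, w\rangle = -\langle v, x(-n)^r w\rangle$ for all $x(n)\in\af$ and $v, w \in \V$, and that the form pairs graded pieces non-degenerately (with the appropriate degree shift).

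Next I would define $\Xi_i = (\Sigma^i)^\perp = \{v \in \V : \langle v, \Sigma^i\rangle = 0\}$ for $i \geq 0$, setting $\Sigma^0 := 0$ so that $\Xi_0 = \V$. The contragredience of the form ensures each $\Xi_i$ is an $\af_k \oplus \af_{\bar k}$-submodule; the inclusion $\Sigma^{i-1} \subset \Sigma^i$ gives $\Xi_i \subset \Xi_{i-1}$; and $\V = \bigcup_i \Sigma^i$ combined with non-degeneracy yields $\bigcap_i \Xi_i = \bigl(\bigcup_i \Sigma^i\bigr)^\perp = \V^\perp = 0$.

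The core step is to identify the successive quotient $\Xi_{i-1}/\Xi_i$. The form induces a pairing between $\Sigma^i/\Sigma^{i-1}$ and $\Xi_{i-1}/\Xi_i$, which, using graded non-degeneracy of the original form together with the finite-dimensionality of each graded piece of $\Sigma^i/\Sigma^{i-1} \cong V_{-\omega_0 \nu_i, k} \ot V_{-\omega_0 \nu_i, \bar k}^c$ (from Theorem \ref{main2}), is non-degenerate piece-by-piece. Hence $\Xi_{i-1}/\Xi_i$ is isomorphic to the contragredient of $\Sigma^i/\Sigma^{i-1}$ as an $\af_k \oplus \af_{\bar k}$-module. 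Since the two $\af$-actions commute, the contragredient functor acts separately on each tensor factor, and the anti-involution $x(n) \mapsto -x(-n)$ squares to the identity so that $(V^c)^c \cong V$, yielding
$$
(V_{-\omega_0 \nu_i, k} \ot V_{-\omega_0 \nu_i, \bar k}^c)^c \cong V_{-\omega_0 \nu_i, k}^c \ot V_{-\omega_0 \nu_i, \bar k},
$$
which is the claimed factor.

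The main technical obstacle I anticipate is rigorously verifying that the induced pairing on each graded piece is non-degenerate, so that $\Xi_{i-1}/\Xi_i$ recovers the full contragredient dual rather than a proper submodule of it. This requires careful bookkeeping of the $\Q$-grading on $\V$ supplied by the Sugawara operators, the fact that the submodules $\Sigma^i$ are homogeneous, and the observation that within each finite-dimensional graded piece the standard linear-algebra duality $\dim (\Sigma^{i-1}\cap\V_{-n})^\perp - \dim(\Sigma^i\cap\V_{-n})^\perp = \dim (\Sigma^i\cap\V_{-n}) - \dim(\Sigma^{i-1}\cap\V_{-n})$ holds, matching the graded dimensions of the contragredient as required.
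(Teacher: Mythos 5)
Your proposal is correct and follows essentially the same route as the paper: take $\Xi_i$ to be the orthogonal complement of $\Sigma^i$ with respect to the non-degenerate bilinear form of [Z1, Proposition 3.28], use contragredience of the form for both $\af$-actions to get submodules and $\bigcap_i \Xi_i = 0$, and identify $\Xi_{i-1}/\Xi_i \cong (\Sigma^i/\Sigma^{i-1})^c \cong V_{-\omega_0\nu_i,k}^c \ot V_{-\omega_0\nu_i,\bar k}$ via the finite-dimensional bigraded pieces determined by the Sugawara operators $L_0,\bar{L_0}$. The bookkeeping you flag (homogeneity of $\Sigma^i$ and piecewise non-degeneracy of the induced pairing) is exactly how the paper handles it, so no gap remains.
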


\begin{proof}
Let $L_0, \bar {L_0}: \V \to \V$
be the Sugawara operators associated 
to the $\af_k$- and $\af_{\bar k}$-actions on $\V$ respectively, 
i.e. $L_0 = \frac{1} { \varkappa} \sum_{j >0} \sum_i \tau_i (-j) \tau_i (j) 
+ \frac{1}{2 \varkappa} \sum_i \tau_i (0) \tau_i (0) $, 
and $\bar {L_0} = - \frac{1} {\varkappa} \sum_{j >0} \sum_i \bar {\tau_i} (-j) \bar {\tau_i} (j)
- \frac{1} {2 \varkappa} \sum_i \bar {\tau_i} (0) \bar {\tau_i} (0)$. 
Now we regard the vertex operator algebra $\V = \bigoplus_{n \geq 0} \V_n$ 
as non-negatively graded, 
then the sum $\mathcal L_0 = L_0 + \bar {L_0}$ is the gradation operator, 
i.e. $\mathcal L_0 |_{\V_n} = n \text {Id}$
(see [Z1, Proposition 3.20, 3.24]). 

Let $\V_{z_1, z_2}$ be the subspace consisting of $v \in \V$
such that $v$ is killed by some power of $L_0 - z_1 \text{Id}$
and some power of $\bar {L_0} - z_2 \text{Id}$. 
It follows from Theorem \ref{main2} that 
$\V = \bigoplus_{z_1, z_2} \V_{z_1, z_2}$
with $\text{dim }  \V_{z_1, z_2} < \infty$. 

Recall the symmetric non-degenerate bilinear form $\langle, \rangle: \V \times \V \to \C$ 
constructed in [Z1, Proposition 3.28]. 
It is shown to be compatible with the vertex operator algebra structure of $\V$, 
in particular we have $\langle x(n) \cd, \cd \rangle = \langle \cd , - x(-n)  \cd \rangle$
and $\langle \bar y(n) \cd, \cd \rangle = \langle \cd,  - \bar y(-n) \cd \rangle$
for any $x (n) \in \af_k, \bar y(n) \in \af_{\bar k}$. 
It implies that 
$\langle L_0 \cd , \cd  \rangle = \langle \cd , L_0 \cd  \rangle$
and $\langle \bar {L_0} \cd , \cd  \rangle = \langle \cd , \bar {L_0} \cd  \rangle$. 
Hence $\langle, \rangle|_{\V_{z_1, z_2} \times \V_{z_1', z_2'}} = 0$
except when $z_1 = z_1'$ and $z_2 = z_2'$, 
in which case the pairing is non-degenerate. 

Let $\V^c = \bigoplus_{z_1, z_2} \V_{z_1, z_2}^*$
be the contragredient dual of $\V$, 
where the $\af_k$- and  $\af_{\bar k}$-actions on $\V^c$ 
are both defined by the anti-involution
$x (n) \mapsto - x(-n); \underline c \mapsto \underline c$ of $\af$. 
Then we have $\V \cong \V^c$ because of the bilinear form $\langle, \rangle$. 

Set $\Xi_i= \{ v \in \V \, | \, \langle v, \Sigma_i \rangle = 0 \}$, 
then $\Xi_i $ is a $\af_k \oplus \af_{\bar k}$-submodule of $\V$. 
Moreover we have $\Xi_i \subset \Xi_{i-1}$, 
and $\bigcap_i \Xi_i = 0$ 
because $\bigcup_i \Sigma_i = \V$ and $\langle, \rangle$ is non-degenerate. 
In fact 
$\Xi_{i-1} / \Xi_i \cong (\Sigma_i / \Sigma_{i-1})^c 
\cong ( V_{ -\omega_0 \nu_i, k} \ot V_{ - \omega_0 \nu_i, \bar k}^c )^c
\cong  V_{ -\omega_0 \nu_i, k}^c \ot V_{ - \omega_0 \nu_i, \bar k}$. 
\end{proof}



\end{document}